\def\Tau{\Upsilon}
\def\d{\delta}
\def\l{\lambda}
\def\vphi{\varphi}
\def\n{\nu}
\def\t{\tau}
\def\e{\varepsilon}
\def\Om{\Omega}
\newcommand{\cK}{{\mathcal K}}
\newcommand{\R}{{\mathbb R}}
\newcommand{\W}{{\mathbb W}}
\newcommand{\mres}{\mathbin{\vrule height 1.6ex depth 0pt width
0.13ex\vrule height 0.13ex depth 0pt width 1.3ex}}
\newcommand{\sL}{\mathscr{L}}
\newcommand{\sH}{{\mathscr H}}
\newcommand{\sM}{{\mathscr M}}
\newcommand{\sP}{{\mathscr P}}
\newcommand{\ds}{\displaystyle}
\newcommand{\diver}{\nabla\cdot}
\newcommand{\iin}{{\rm{in}}}
\newcommand{\oon}{{\rm{on}}}
\renewcommand{\ae}{{\rm{a.e.}}}
\newcommand{\dd}{\,{\rm d}}
\newcommand{\id}{{\rm id}}
\renewcommand{\u}{{\textbf{u}}}
\renewcommand{\v}{{\textbf{v}}}
\renewcommand{\n}{{\hat n}}
\newcommand{\adm}{{\rm adm}}
\newtheorem{remark}{\textbf{Remark}}[section]
\newtheorem{theorem}{\textbf{Theorem}}[section]
\newtheorem{lemma}[theorem]{\textbf{Lemma}}
\newtheorem{corollary}[theorem]{\textbf{Corollary}}
\newtheorem{definition}[remark]{\textbf{Definition}}
\numberwithin{equation}{section}
\title[]{Uniqueness issues for evolution equations with density constraints}  
\author[S. Di Marino]{Simone Di Marino} 
\address{Laboratoire de Math\'ematiques d'Orsay, Universit\'e Paris-Sud, CNRS, Universit\'e Paris-Saclay, France}
\email{simone.dimarino@math.u-psud.fr}
\author[A.R. M\'esz\'aros]{Alp\'ar Rich\'ard M\'esz\'aros}  
\date{\today}
\address{Department of Mathematics, UCLA, Los Angeles, California, USA}
\email{alpar@math.ucla.edu} 
\thanks{{\it Keywords and phrases}: crowd motion models; uniqueness results; density constraints; optimal transport; quasilinear elliptic-parabolic system}
\thanks{{\it Mathematics Subject Classification:} 35A02; 35K61; 35F25; 49J45}
\begin{document}
\maketitle
\begin{abstract}
In this paper we present some basic uniqueness results for evolutive equations under density constraints. First, we develop a rigorous proof of a well-known result (among specialists) in the case where the spontaneous velocity field satisfies a monotonicity assumption: we prove the uniqueness of a solution for first order systems modeling crowd motion with hard congestion effects, introduced recently by \emph{Maury et al.} The monotonicity of the velocity field implies that the $2-$Wasserstein distance along two solutions is $\lambda$-contractive, which in particular implies uniqueness. In the case of diffusive models, we prove the uniqueness of a solution passing through the dual equation, where we use some well-known parabolic estimates to conclude an $L^1-$contraction property. In this case, by the regularization effect of the non-degenerate diffusion, the result follows even if the given velocity field is only $L^\infty$ as in the standard Fokker-Planck equation.
\end{abstract}

\section{Introduction and preliminaries}

Recently, modeling crowd behavior has received a lot of attention in applied mathematics. These models actually are in the heart of many other ones coming from biology (for instance cell migration, tumor growth, pattern formations in animal populations, etc.), particle physics and economics (see for example the recently introduced models of Mean Field Games, \cite{lasry1,lasry2,lasry3}). For more details on these models we direct the reader to the non-exhaustive list of works \cite{Cha1, Col, Cos, CriPicTos, Helb1, Helb3, Hend, Hug1, Hug2}.
In all these models the question of {\it congestion} can play a crucial role. Indeed, from the modeling point of view one could have some `singularities' if individuals want to occupy the same spot. In this paper, we will work with equations which model some type of  congestion effects in crowd motion models (for a more detailed description of these models we direct the reader to the references \cite{MauRouSan1, MauRouSanVen, MesSan}). These systems read as 
$$
\begin{cases}
\partial_t\rho_t-\nu\Delta\rho_t+\nabla\cdot(\rho_t\v_t)=0\\
\rho_t\le1,\ \rho|_{t=0}=\rho_0\\
\v_t=P_{\adm(\rho_t)}[\u_t].
\end{cases}
$$
In the above system $\rho_t$ represents the density of a crowd (at time $t$) that moves in $\Omega\subset\R^d$ for a time $T>0$ accordingly to the prescribed velocity $\u_t$, a given field that everyone would follow in the absence of the others. $\nu\in\{0,1\}$ is just a parameter: if $\nu=0$, one deals with first order systems (the density of the population is just transported by some vector field) introduced in \cite{MauRouSan1}, while for $\nu=1$ one has a second order system (in addition to the transport, the population is also affected by some randomness, modeled by a non-degenerate diffusion) studied in \cite{MesSan}. In order to preserve a density constraint (we suppose that $\rho_t$ does not exceed a given threshold, let us say $1$), at each moment one modifies $\u_t$ to $\v_t,$ a field that is the closest to $\u_t$ (in the $L^2$ sense) and it is an admissible velocity field, i.e. the set $\adm(\rho_t)$ represents the fields that do not increase the density on already saturated zones $\{\rho_t=1\}.$ 

In generic congested models a pressure is also acting (Darcy's law): this pressure is preventing congestion and, according to various models, it is an increasing function of the density. In the usual \textit{soft} congestion models we have some power law of the type $p=\rho^m$ (porous medium equation). However this choice cannot prevent $\rho$ to be very high, while it is clear that a crowd of people cannot have a density higher than a certain threshold (studies say that the maximum density is $4.5$ people$/m^2$), which we put for convenience equal to $1$: we will refer to the constraint $\rho \leq 1$ as an \textit{hard} congestion effect. We shall present later how the pressure field appears in the models considered in this paper. 

Introducing general multivalued monotone operators $\beta$, the above systems can be written in a compact form as
\begin{equation}\label{eqn:main}
\begin{cases} \partial_t \rho_t + \nabla \cdot ( \u_t \rho_t ) = \Delta p_t  \quad & \text{ in }(0,T)\times\Omega\\  (\u_t \rho_t - \nabla p) \cdot \hat{\mathbf{n}}  = 0 & \text{ on } (0,T)\times\partial \Omega \\  p_t \in \beta (\rho_t). 
\end{cases}
\end{equation}
The two cases $\nu=0$ and $\nu=1$ correspond to two special operators $\beta_1$ and $\beta_2$, namely

$$ \beta_1 (\rho ) = \begin{cases} 0 \quad & \text{ if }\rho <1 \\ [0,+\infty] & \text{ if }\rho=1 \\ +\infty & \text{ if } \rho>1, \end{cases} \qquad \quad 
\beta_2 (\rho ) = \begin{cases} \rho \quad & \text{ if }\rho <1 \\ [1,+\infty] & \text{ if }\rho=1 \\ +\infty & \text{ if } \rho>1; \end{cases} $$
 notice in particular that $\beta_2(\rho) = \beta_1(\rho)+\rho$. The \textit{hard} congestion effect is due to the fact that $\beta_{i}(\rho)= +\infty$ whenever $\rho >1$: in fact, this will force $\rho$ to be always less than $1$. 
It is worthwhile noticing  that this problem has some features in common with the Hele-Shaw model and the Stefan problem (see for instance \cite{Igb2, IgbShiWit}), namely in both problems there is a degenerate monotone operator linking  the density and the diffusive part. However there are also big differences: in our case we treat also the convection term, which can depend not smoothly on the position, while usually in the Hele-Shaw models there is a source term for the mass, which we do not treat here since we want to model a crowd moving inside a domain $\Omega$. This modeling assumption is also the reason why we consider the no-flux Neumann boundary condition, since we want neither people exiting nor entering the domain.

Moreover our equations can be seen also as a quasilinear elliptic-parabolic system with very degenerate nonlinearity (see \cite{Otto,Igb, Carrillo}), for which the issue of uniqueness is still an open problem when we add a driving vector field without imposing an entropy condition. In this context we may write our equations using the variable $u=p+\rho$ as
$$ \partial_t g( u )  + \nabla \cdot ( \Phi (t,x, u ))= \Delta b_i( u ), $$
where $b_1(u)=(u-1)_+$, $g(u)=u-b_1(u)$, $b_2(u)=u$ and $\Phi(t,x,u ) = \u_t(x) g(u)$. In the case $i=1$ we have a double degeneracy since both $b_1$ and $g$ have a flat part, instead for the case $i=2$ we have only a degenerate part in $g(u)$. However again the (possibly) rough coefficient in $\Phi$ rules out results already present in the literature.  For problems of this form we expect an $L^1-$contraction result, see for example \cite{Carrillo}; notice also that we do not need a concept of entropic solution since the equation reduces to a linear one in the joint variable $(\rho,p)$ (see the systems \eqref{main2} below). Thus the usual concept of weak solutions can be considered for our purposes.

In fact, we will be dealing always with the following reformulations of the systems, using the fact that $p \in \beta(\rho)$ if and only if $\rho \leq 1$ and $p \in H^1_{\rho}(\Omega)$ (see its definition in \eqref{def:h1r}):
\begin{equation}\label{main2} \begin{cases} \partial_t \rho_t + \nabla \cdot ( \u_t \rho_t ) = \Delta p_t  \; & \text{ in }(0,T)\times\Omega\\  (\u_t \rho_t - \nabla p) \cdot \hat{\mathbf{n}}  = 0 & \text{ on } (0,T)\times\partial \Omega \\  \rho_t \leq 1 , \, p_t \in H_{\rho_t}^1(\Omega).  \end{cases} \
 \begin{cases}  \partial_t \rho_t + \nabla \cdot ( \u_t \rho_t ) = \Delta (p_t  + \rho_t ) \quad & \text{ in }(0,T)\times\Omega\\  (\u_t \rho_t - \nabla p - \nabla \rho_t) \cdot \hat{\mathbf{n}}  = 0 & \text{ on } (0,T)\times\partial \Omega \\  \rho_t \leq 1 , \, p_t \in H_{\rho_t}^1(\Omega).  \end{cases} 
\end{equation}

In Subsection \ref{oneone} we will derive in another way these systems: that derivation justifies also the regularity assumption of $p_t$.

A very powerful tool to attack these type of macroscopic hard-congestion problems -- where we impose a density constraint on the density of the population -- is the theory of optimal transport (see \cite{villani, OTAM}), as we can see in the recent works \cite{MauRouSan1, MauRouSan2, MauRouSanVen, MesSan, AleKimYao}. In this framework, the density of the agents satisfies a continuity, or a Fokker-Planck equation (with a velocity field taking into account the congestion effects) and can be seen as a curve in the Wasserstein space.

Our aim in this paper is to prove some basic results of uniqueness of the solutions in this setting. As far as we are aware of, this question is a missing puzzle in full generality in the models studied in \cite{MauRouSan1, MauRouSan2, MauRouSanVen, MesSan}. Let us remark that the uniqueness question is crucial if one wants to include this type of models into a larger system and one aims to show existence results by fixed point methods, as it is done for Mean Field Games in general for instance (for example as in \cite{Por}). 

We will treat two different cases, with very different approaches: in the first one we simply consider a crowd driven by a given velocity field and subject to a density constraint. In this case the assumption that the velocity field is monotone will be crucial in order to prove a $\lambda$-contraction result for the solutions, that will imply uniqueness. In the second case we add a diffusive term, which models some randomness in the crowd movement (see \cite{MesSan} for recent developments and existence results in this setting); in this case we prove a standard $L^1-$contraction property passing to the dual problem and proving there existence for sufficiently generic data. In this case a major role is played by the regularizing effect of the Laplacian, which allows us to prove the result even if the velocity field is merely bounded.


Let us underline that the core of both methods is classical in the literature.

\begin{itemize}
\item[(i)] On one hand we use the differentiation of the squared Wasserstein distance along two solutions of continuity-type equations, and then use a Gr\"onwall-type argument to show a contractive property and hence uniqueness of solutions for these evolution equations. This type of proof is very common in the optimal transport theory; nevertheless our analysis requires some finer nontrivial new ideas, mainly because of the appearance of the new pressure variable which we can formally identify as a term associated to the subdifferential of the indicatrix function of the (geodesically convex) set $\{ \rho \in \sP (\Om) \; : \; \rho \leq 1  \}$. Moreover, we believe that by the nature of our problem the machinery of the $L^1-$contraction through the doubling and re-doubling of the variables (successfully used for instance in \cite{Otto} and \cite{Carrillo}) seems to be very difficult, complicated and heavy to adapt to our situation, while the method proposed by us is simple and elegant. In addition, in this context we give new proofs for some of the used results from the theory of optimal transport.  

\item[(ii)] On the other hand, in the second order case (when we add a non-degenerate diffusion term into the model) the idea to pass to the dual equation to show the uniqueness is very similar to the techniques developed already in \cite{Cro} for instance; the method to obtain the $L^1-$contraction follows the same lines of \cite{Bou}.  However the  PDE  studied here seems to be new in this context and the result of uniqueness is per se interesting in the theory of crowd motion. 
\end{itemize}

We underline moreover that the two methods used in the two models are mutually exclusive: the $W_2$ distance along two solutions of the second order model would be contractive if the vector field would be monotone, but we do not require this assumption; the parabolic estimates used for the second model highly rely on the fact that one has a non-degenerate diffusion in the system, which is clearly not the case for the first order model.

We remark also that we expect a $L^1-$contraction result also in the first case, since it can be seen as a doubly degenerate quasilinear elliptic-parabolic equation \cite{Otto}. In Section \ref{ss:l1} we provide a sketch of this result in the case in which the velocity field is monotone: we underline that we use the uniqueness proved in Section \ref{two}.

\subsection{The density constraint: admissible velocities and pressures}\label{oneone}

In order to model crowd movement in the macroscopic setting with hard congestion, we work in a convex bounded domain $\Om\subset\R^d$  such that $|\Om|>1$. The evolution of the crowd will be analyzed by the evolution of its density, which is assumed to be a probability measure on $\Om$. The condition we want to impose is a bound on the density of the crowd, which is considered to be always less than or equal to $1$. In particular the set of admissible measures will be denoted by 
$$\cK_1:=\{\rho\in\sP(\Omega):\rho\le 1\ {\rm{a.e.}}\}$$ 
(here and after we identify $\rho$ with its density with respect to the Lebesgue measure $\mathscr{L}^d$).

As for the velocities, we need to impose that the density is not increasing when it is saturated: informally we would say that $\v$ is an admissible velocity for the measure $\rho \in\cK_1$ if $ \diver \v \geq 0$ in the set $\{ \rho=1\}$ and $\v \cdot \hat{n} \leq 0$ on $\partial \Omega$ where $\hat{n}$ is the outward normal. In order to make a rigorous definition we have to introduce the set of pressures:
\begin{equation}\label{def:h1r} 
H^1_{\rho}(\Omega) := \{ p \in H^{1}(\Omega) \; : \; p \geq 0 , \;\, p (1-\rho) = 0\;{\rm{a.e.}}\}.
\end{equation}

Then, using the integration by parts formula, formally for $p\in H^1_\rho(\Omega)$
and $\v$ admissible one should have
$$0\leq \int_{\Omega} p \diver \v  \dd x - \int_{\partial \Omega} p \v \cdot \hat{n} \dd\sH^{d-1}= - \int_{\Omega} \v \cdot \nabla p  \dd x$$ and so we can define
$$ \adm (\rho) :=\left\{ \v \in L^2(\Omega; \R^d ) \; : \; \int_{\Omega} \v \cdot \nabla p \dd x \leq 0 \, \text{ for every } p \in H^1_{\rho}(\Omega) \right\}. $$
In the sequel we will denote by $P_{\adm(\rho)}$ the $L^2(\Omega, \mathscr{L}^d; \R^d)-$projection onto the cone $\adm(\rho)$.

 Now, in order to preserve the constraint $\rho \leq 1$, we impose that the velocity always belongs to $\adm (\rho )$ and so a generic evolution equation with density constraint will be
$$ \begin{cases} \partial_t \rho_t + \diver (\rho_t \v_t) = 0 \\ \rho_t \leq 1, \, \v_t \in \adm (\rho_t). \end{cases} $$
One of the simplest such model is when we have a prescribed time-dependent velocity field $\u_t$ and we want to impose that the velocity $\v_t$ is the nearest possible to $\u_t$, time by time. This describes a situation where the crowd wants to have the velocity $\u_t$ but it cannot, because of the density constraint, and so it adapts its velocity trying to deviate as little as possible: this will result in an highly nonlocal and discontinuous effect. The first order problem hence reads as
\begin{equation}\label{fp0}
\begin{cases}
\partial_t\rho_t+\nabla\cdot(\rho_t\v_t)=0 \quad \text{ in }(0,T)\times\Omega\\
\rho_t\le1,\ \rho|_{t=0}=\rho_0\\
\v_t=P_{\adm(\rho_t)}[\u_t],
\end{cases}
\end{equation}
where the first equation is meant in the weak sense and the minimal hypothesis in order to have a well defined projection is $\u \in L^2([0,T] \times \Om)$. In the following lemma we characterize the projection of the velocity field:

\begin{lemma}\label{lem:projection} 
Let $\rho \in \sP (\Omega)$ such that $\rho \leq 1$ a.e. and let $\u \in L^2(\Omega; \R^d)$. Then there exists $p \in H^1_\rho(\Om)$ such that $ P_{\adm (\rho)} [ \u ] = \u - \nabla p$. Furthermore $p$ is characterized by
\begin{itemize}
\item[(i)] $\ds \int_{\Omega} \nabla p  \cdot ( \u - \nabla p ) \dd x =0$;
\item[(ii)] $\ds \int_{\Omega} \nabla q \cdot (\u - \nabla p) \dd x \leq 0$, for all $q \in H^1_\rho(\Om)$.
\end{itemize}
\end{lemma}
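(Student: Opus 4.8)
The plan is to recognize $P_{\adm(\rho)}$ as the projection onto a closed convex cone and to split $\u$ by a Moreau-type orthogonal decomposition along two mutually polar cones. First I would set $C := \{ \nabla p : p \in H^1_\rho(\Omega)\} \subset L^2(\Omega;\R^d)$. Since both defining conditions of $H^1_\rho(\Omega)$, namely $p \ge 0$ and $p(1-\rho)=0$ a.e., are stable under addition and under multiplication by nonnegative scalars, $C$ is a convex cone containing $0$; moreover, by the very definition of $\adm(\rho)$ one has $\adm(\rho) = C^\circ$, the polar cone of $C$ in $L^2$. In particular $\adm(\rho)$ is a closed convex cone, so $P_{\adm(\rho)}$ is well defined.

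The heart of the matter is that $C$ is \emph{closed} in $L^2(\Omega;\R^d)$. Here one uses that $\{\rho<1\}$ has positive Lebesgue measure — this is where the standing assumption $|\Omega|>1$ enters, for otherwise $\rho\le1$ would be incompatible with $\int_\Omega\rho=1$ — and that every $p\in H^1_\rho(\Omega)$ vanishes a.e. on this fixed set. Since $\Omega$ is convex, hence connected, the Poincaré inequality for $H^1$ functions vanishing on a fixed set of positive measure provides a constant $C_\Omega$ with $\|p\|_{L^2}\le C_\Omega\|\nabla p\|_{L^2}$ for all $p\in H^1_\rho(\Omega)$. Consequently, if $\nabla p_n\to g$ in $L^2$, then $(p_n)$ is Cauchy in $H^1(\Omega)$ and converges to some $p\in H^1(\Omega)$ with $\nabla p=g$; extracting an a.e. convergent subsequence gives $p\ge0$ and $p(1-\rho)=0$ a.e., so $p\in H^1_\rho(\Omega)$ and $g\in C$.

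With $C$ closed, I would put $\nabla p:=P_C[\u]$ — legitimate, as this projection lies in $C$ — and $v:=\u-\nabla p$. Because $0\in C$ and $C$ is a cone, the variational inequality characterizing $P_C$ forces $\int_\Omega\nabla p\cdot(\u-\nabla p)\dd x=0$, which is (i), and $\int_\Omega z\cdot(\u-\nabla p)\dd x\le0$ for every $z\in C$, i.e. $v\in C^\circ=\adm(\rho)$; testing the latter against $z=\nabla q$ yields (ii). To see that $v$ is really the projection of $\u$ onto $\adm(\rho)$, observe that for any $w\in\adm(\rho)$,
\[
\int_\Omega(\u-v)\cdot(w-v)\dd x=\int_\Omega\nabla p\cdot w\dd x-\int_\Omega\nabla p\cdot v\dd x\le0,
\]
the first integral being $\le0$ since $\nabla p\in C$ and $w\in C^\circ$, and the second vanishing by (i); this is exactly the inequality characterizing $P_{\adm(\rho)}[\u]$. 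Running the same computation backwards, any $p'\in H^1_\rho(\Omega)$ satisfying (i)--(ii) must likewise give $\u-\nabla p'=P_{\adm(\rho)}[\u]=v$, hence $\nabla p'=\nabla p$; as $\Omega$ is connected and $p,p'$ both vanish on $\{\rho<1\}$, we get $p'=p$, so (i)--(ii) indeed characterize $p$.

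I expect the only genuine difficulty to be the closedness of $C$: one must make sure the vanishing set $\{\rho<1\}$ has positive measure and that the Poincaré inequality with a prescribed vanishing set applies on $\Omega$. Everything else is soft convex analysis in a Hilbert space, modulo careful bookkeeping of the polar cone identity $\adm(\rho)=C^\circ$.
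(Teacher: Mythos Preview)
Your proof is correct and follows essentially the same route as the paper: set $K=\{\nabla p:p\in H^1_\rho(\Omega)\}$, observe that $\adm(\rho)=K^\circ$, and apply the Moreau decomposition $\u=P_K[\u]+P_{K^\circ}[\u]$. In fact you are more careful than the paper, which simply asserts that $K$ is closed, whereas you supply the argument via the Poincar\'e inequality for functions vanishing on $\{\rho<1\}$ (a fact the paper invokes explicitly only later, in Section~\ref{three}).
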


\begin{proof} Let us set $K=\{ \nabla p \; : \; p \in H^1_\rho(\Om) \}$. It is easy to see that $K$ is a closed cone in $L^2(\Om;\R^d)$.
Let us recall that the polar cone to $K$ is defined as 
$$K^o:=\left\{\v\in L^2(\Om;\R^d)\; : \; \int_\Om \v\cdot\nabla q\dd x\le 0,\ \forall\ \nabla q\in K \right\}.$$
By the definition of the admissible velocities we have $\adm (\rho ) = K^o$. Moreau decomposition applied to $K$ and $K^o$ gives
$$ \u  = P_K [\u] + P_{K^o} [ \u ]  \qquad \forall \u \in L^2(\Omega; \R^d).$$
This proves the claims (i) and (ii), since these are precisely the conditions of being the projection onto a cone. 
\end{proof}
\begin{corollary}\label{cor:proj}
Lemma \ref{lem:projection} ${\rm{(i)}}$ implies that $\ds \int_{\Om} | \u|^2\dd x = \int_{\Om} | \nabla p|^2\dd x + \int_{\Om} | \u - \nabla p|^2\dd x$, and so in particular we get
$$\ds\int_\Om|\nabla p|^2\dd x\le\int_\Om|\u|^2\dd x, \qquad \quad \int_\Om|P_{\adm (\rho)} [ \u ]|^2\dd x\le\int_\Om|\u|^2\dd x.$$
\end{corollary}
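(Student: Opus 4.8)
The plan is to simply expand the square of the pointwise decomposition $\u = \nabla p + (\u - \nabla p)$ and integrate over $\Om$. First I would write, for a.e.\ $x \in \Om$,
$$|\u|^2 = |\nabla p|^2 + 2\,\nabla p \cdot (\u - \nabla p) + |\u - \nabla p|^2,$$
which is just the elementary identity $|a+b|^2 = |a|^2 + 2\,a\cdot b + |b|^2$ in $\R^d$ applied with $a = \nabla p$ and $b = \u - \nabla p$. Since $\u \in L^2(\Om;\R^d)$ and $\nabla p \in L^2(\Om;\R^d)$ — the latter because $p \in H^1_\rho(\Om) \subset H^1(\Om)$ by Lemma~\ref{lem:projection} — all three terms on the right-hand side belong to $L^1(\Om)$, so I may integrate term by term. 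The cross term $\int_\Om \nabla p \cdot (\u - \nabla p)\dd x$ vanishes by Lemma~\ref{lem:projection}(i), and this yields the claimed Pythagorean identity
$$\int_\Om |\u|^2 \dd x = \int_\Om |\nabla p|^2 \dd x + \int_\Om |\u - \nabla p|^2 \dd x.$$

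From this identity the two inequalities are immediate: both summands on the right are nonnegative, hence each is bounded above by $\int_\Om |\u|^2\dd x$. For the second one I would only recall, again from Lemma~\ref{lem:projection}, that $P_{\adm(\rho)}[\u] = \u - \nabla p$ for the $p$ produced there, so that $\int_\Om |P_{\adm(\rho)}[\u]|^2\dd x = \int_\Om |\u - \nabla p|^2\dd x \le \int_\Om |\u|^2\dd x$. There is essentially no obstacle in this argument; the only points deserving a word are the $L^1$-integrability of the cross term (which is why I stressed $\nabla p \in L^2(\Om;\R^d)$) and the fact that we are applying Lemma~\ref{lem:projection}(i) to the specific pressure $p$ it furnishes, so that $\u - \nabla p$ is genuinely the projection $P_{\adm(\rho)}[\u]$.
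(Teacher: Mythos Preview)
Your proof is correct and is exactly the argument the paper has in mind: the corollary is stated without a separate proof precisely because the Pythagorean identity follows immediately from expanding $|\u|^2 = |\nabla p + (\u-\nabla p)|^2$ and applying the orthogonality relation in Lemma~\ref{lem:projection}(i). There is nothing to add.
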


Using Lemma \ref{lem:projection}  we get that if $(\rho, \v)$ is a solution to  \eqref{fp1} then $\v_t = \u_t - \nabla p_t$. Now, using that $\rho_t \nabla p_t = \nabla p_t$, we have that $(\rho, p)$ is a solution to 
\begin{equation}\label{fp1}
\begin{cases}
\partial_t\rho_t+\nabla\cdot(\rho_t\u_t)=\Delta p_t, \quad & \text{in }(0,T)\times\Omega\\
\rho_t\le1, \, p_t \in H^1_{\rho_t}(\Omega).
\end{cases}
\end{equation}

Thus we found again equation \eqref{main2}, where we imposed also the no-flux boundary condition, which is the one expected when we model a crowd in a closed room $\Om$. Now we are ready to give the formal definition of solution to our problem:

\begin{definition}\label{def:11} Let $ \u \in L^2([0,T] \times \Om)$ and let $\rho_0 \in\cK_1$. Then we define a solution to \eqref{fp1}  to be a couple $(\rho, p ) \in L^{ \infty} ([0,T] \times \Om )\cap AC([0,T];(\sP(\Om),W_2)) \times  L^2 ( [0,T]; H^1( \Om )) $ with $\rho|_{t=0}=\rho_0$ and such that:

\begin{itemize}

\item for all $\phi \in C_c^{\infty} ( \R^d)$ and for all $0\leq r<s \leq T$ we have

$$ \int_r^s \int_{\Om}  (\u_t (x)- \nabla p (x))  \cdot \nabla \phi (x)  \rho_t(x) \, dx \, dt = \int_{\Om} \phi(x)  \rho_s (x) \, dx- \int_{\Om} \phi(x)  \rho_r (x) \, dx; $$
\item we have $0 \leq \rho \leq 1$ for $  \sL^1\mres{[0,T]} \otimes \sL^d\mres{\Om} $-a.e. $(t,x)$ and $p_t \in H^1_{\rho_t} ( \Omega)$ for $ \sL^1\mres{[0,T]}$-a.e. $t$.

\end{itemize}

\end{definition}

\begin{remark}
We remark that the boundary condition implied by our definition is homogeneous Neumann. Note that we assume for the density $\rho$ to be also an absolutely continuous curve in the space of probability measures equipped with the Wasserstein distance\footnote{see the subsection on optimal transport below and \cite{AmbGigSav} for further details.}. Like this it is meaningful to assume $\rho|_{t=0}=\rho_0.$
\end{remark}

In \cite{MauRouSanVen,MauRouSan1} and \cite{aude:phd} the following regularity hypotheses have been assumed to show the existence result: $\u\in C^1$ or $\u=-\nabla D$ for a $\l-$convex potential $D$ and in both cases no dependence on time.

\subsection{The diffusive case}\label{diffusive}

Recently (see \cite{MesSan}) a second order model for crowd motion has been proposed. In a nutshell, it consists of adding a non-degenerate diffusion to the movement and imposing the density constraint. This leads to a modified Fokker-Planck equation and, with the notations previously introduced, it reads as 

\begin{equation}\label{fokker1}
\begin{cases}
\partial_t\rho_t-\Delta\rho_t+\nabla\cdot(\rho_t\v_t)=0\\
\rho_t\le1,\ \rho|_{t=0}=\rho_0\\
\v_t=P_{\adm(\rho_t)}[\u_t],
\end{cases}
\end{equation}
where $\u_t$ is -- as before -- the desired given velocity field of the crowd. Introducing the pressure gradient in the characterization of the projection, one can write system \eqref{fokker1} as
\begin{equation}\label{fokker2}
\begin{cases}
\partial_t\rho_t+\nabla\cdot(\rho_t \u_t ) =\Delta (p_t +\rho_t)\\
\rho_t\le1,\ \rho|_{t=0}=\rho_0, \, p_t \in H^1_{\rho_t}(\Omega)
\end{cases}
\end{equation}
where, as before we equip the equation with the natural no-flux boundary condition on $\partial\Om$; the rigorous definition of solution is similar to that of Definition \ref{def:11}.  We notice also the fact that we have in particular $p_t+ \rho_t \in \beta_2(\rho_t)$ and so we are in fact solving equation \eqref{eqn:main} with $\beta=\beta_2$.

Under the assumption that $\Om$ is convex and $\u\in L^\infty([0,T]\times\Om)$ it has been shown (see \cite[Theorem 3.1]{MesSan}) that the system \eqref{fokker2} admits a solution $(\rho,p)\in L^\infty([0,T]\times\Om)\times L^2([0,T];H^1(\Om)).$ In addition $[0,T]\ni t\mapsto\rho_t$ is an absolutely continuous curve in the 2-Wasserstein space (see Subsection \ref{OptT}). We direct the reader to \cite{MesSan} for further details on this model.

\subsection{Optimal trasport}\label{OptT}
Here we collect some facts about optimal transport that will be needed in the sequel. Given $X_1$ and $X_2$ two measurable spaces and $T$ a measurable map between them, we say that the Borel measure $\mu_2$ is the {\it push forward} of the Borel measure $\mu_1$ through $T$ and we write $\mu_2 = T_{\sharp} \mu_1$, if $\mu_2(A) = \mu_1(T^{-1}(A))$ for every measurable set $A \subseteq X_2$.

Given two measures $\mu \in \sP(X_1)$ and $\nu \in\sP(X_2)$, we define $\Pi(\mu,\nu)$ as the set of  $\gamma \in \sP(X_1 \times X_2)$ such that $(\pi_1)_{\sharp} \gamma  = \mu$ and $(\pi_2)_{\sharp} \gamma = \nu$, where $\pi_i$ is the projection to the $i$-th coordinate: these measures are called {\it transport plans} between $\mu$ and $\nu$.

A particular example of transport plans is given by the transport maps: whenever we have a measurable $T: X_1 \to X_2$ such that $T_{\sharp} \mu = \nu$ we have that the induced plan $\gamma_T=(\id,T)_{\sharp} \mu$ belongs to $\Pi(\mu, \nu)$.
Let us summarize some well-known facts about optimal transport in the following theorem (see for instance \cite{villani} or \cite{OTAM}). 

\begin{theorem}\label{OT}
Let $\Omega \subset \R^d$ be an open bounded set and let $\mu , \nu \in \sP(\Omega)$. Let us consider the following quantities
\begin{equation}
A(\mu, \nu) = \inf \left\{ \int_{\Omega \times \Omega}  |x - y|^2 \dd \gamma \; : \; \gamma \in \Pi ( \mu, \nu ) \right \}
\tag{P}\label{P}
\end{equation}

\begin{equation}
B(\mu,\nu)=\sup \left\{ \int_{\Omega}  \vphi (x) \dd \mu + \int_{\Omega} \psi (x) \dd \nu \; : \;\vphi,\psi\in C_b(\Om), \vphi (x) + \psi (y) \leq \frac 12 | x-y|^2\; \; \forall \,x,y \in \Omega \right\},\tag{D}\label{D}
\end{equation}
where $C_b(\Om)$ denotes the space of bounded continuous functions on $\Om$.
We will call \eqref{P} the primal and \eqref{D} the dual problem.

\begin{itemize}

\item[(i)] There exists at least a minimizer for the primal problem (the set of minimizers is denoted by $\Pi_o(\mu,\nu)$) and there exists also a maximizer $(\vphi,\psi)$ in the dual problem.
\item[(ii)] $A(\mu,\nu) = 2B(\mu, \nu)$ and we will call $W^2_2(\mu,\nu)$ the common value.
\item[(iii)] We can choose a maximizer $(\vphi,\psi)$ of \eqref{D} such that $\vphi$ and $\psi$ are Lipschitz in $\Omega$ and also such that $\frac 12 |x|^2 -\vphi(x)$ is a convex function on the convex hull of $\Omega$. If $\mu\ll \sL^d$ then its gradient is a map $T(x)=x- \nabla \vphi(x)$ such that $T_{\sharp} \mu = \nu$ and whose associated plan is the unique optimal plan, that is $\Pi_o(\mu,\nu)=\{ \gamma_T \}$.
\item[(iv)] If $\Omega$ is convex, $\sP(\Omega)$ endowed with the Wasserstein distance $W_2$ is a geodesic space and if $\mu\ll \sL^d$ the geodesic between $\mu$ and $\nu$ is unique and it is described by $$[0,1]\ni t\mapsto\mu_t:= (\id + t (T- \id) ) _{\sharp} \mu.$$
\end{itemize}
\end{theorem}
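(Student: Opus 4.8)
The plan is to establish the four parts in the listed order, since each relies on the previous ones; throughout I use that $W_2$ is a metric (the triangle inequality coming from the gluing lemma), which I take as known. First, the inequality $B(\mu,\nu)\le\frac12 A(\mu,\nu)$ is immediate: for $\gamma\in\Pi(\mu,\nu)$ and any admissible pair $(\vphi,\psi)$ we have $\int_\Om\vphi\dd\mu+\int_\Om\psi\dd\nu=\int_{\Om\times\Om}\bigl(\vphi(x)+\psi(y)\bigr)\dd\gamma\le\frac12\int_{\Om\times\Om}|x-y|^2\dd\gamma$, and one optimizes over $(\vphi,\psi)$ and $\gamma$. Existence of a primal minimizer is then the direct method: every $\gamma\in\Pi(\mu,\nu)$ is supported in the fixed compact set $\ov\Om\times\ov\Om$, so $\Pi(\mu,\nu)$ is tight, hence weakly-$*$ sequentially compact by Prokhorov; it is weakly-$*$ closed; and $\gamma\mapsto\int|x-y|^2\dd\gamma$ is weakly-$*$ continuous since the integrand is bounded and continuous on $\ov\Om\times\ov\Om$. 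Fix from now on an optimal plan $\gamma_o$.

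The heart of (ii) and (iii) is the construction of a Kantorovich potential. Since $c(x,y):=\frac12|x-y|^2$ is continuous, $\supp\gamma_o$ is $c$-cyclically monotone: otherwise one could find finitely many points of $\supp\gamma_o$ along which rearranging a small amount of mass, while preserving both marginals, would strictly lower $\int c\dd\gamma$, contradicting optimality -- the rigorous version of this needs a standard disintegration/measurable-selection argument. From $c$-cyclical monotonicity, the classical construction of R\"uschendorf (fix $(x_0,y_0)\in\supp\gamma_o$ and take the infimum over finite chains in $\supp\gamma_o$ of the corresponding telescoping cost differences) yields a $c$-concave function $\vphi$ such that, with $\psi:=\vphi^c$, one has $\vphi(x)+\psi(y)\le c(x,y)$ for all $x,y$ and $\vphi(x)+\psi(y)=c(x,y)$ on $\supp\gamma_o$; since $\ov\Om$ is bounded, $\vphi$ and $\psi$ are Lipschitz with a uniform constant. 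Integrating against $\gamma_o$ gives $\int_\Om\vphi\dd\mu+\int_\Om\psi\dd\nu=\int c\dd\gamma_o=\frac12 A(\mu,\nu)$, so $B(\mu,\nu)\ge\frac12 A(\mu,\nu)$; together with the easy inequality this proves $A=2B$ and shows $(\vphi,\psi)$ is a dual maximizer, completing (i) and (ii). Finally, from $\vphi(x)=\inf_y\bigl(\tfrac12|x-y|^2-\psi(y)\bigr)$ we get $u(x):=\tfrac12|x|^2-\vphi(x)=\sup_y\bigl(\langle x,y\rangle-(\tfrac12|y|^2-\psi(y))\bigr)$, a supremum of affine functions, hence a convex function on the convex hull of $\Om$.

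For the remaining part of (iii), suppose $\mu\ll\sL^d$. The convex function $u$ is differentiable $\sL^d$-a.e., hence $\mu$-a.e., so $T(x):=\nabla u(x)=x-\nabla\vphi(x)$ is well defined $\mu$-a.e. For $(x,y)\in\supp\gamma_o$ the identity $\vphi(x)+\psi(y)=c(x,y)$ says that $y$ attains the infimum in $\vphi(x)=\inf_{y'}\bigl(\tfrac12|x-y'|^2-\psi(y')\bigr)$, equivalently that $y\in\partial u(x)$; since $u$ is differentiable $\mu$-a.e. this forces $y=T(x)$ for $\gamma_o$-a.e. $(x,y)$, so $\gamma_o=(\id,T)_\sharp\mu$ and in particular $T_\sharp\mu=\nu$. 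As this reasoning applies verbatim to \emph{any} optimal plan, $\Pi_o(\mu,\nu)=\{\gamma_T\}$.

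For (iv), with $\Om$ convex the curve $\mu_t:=\bigl((1-t)\pi_1+t\pi_2\bigr)_\sharp\gamma_o$ stays in $\sP(\Om)$; using $\bigl((1-s)\pi_1+s\pi_2,\,(1-t)\pi_1+t\pi_2\bigr)_\sharp\gamma_o$ as a competitor between $\mu_s$ and $\mu_t$ gives $W_2(\mu_s,\mu_t)\le|t-s|W_2(\mu,\nu)$, and the triangle inequality forces equality, so $(\mu_t)$ is a constant-speed geodesic; when $\mu\ll\sL^d$ it equals $(\id+t(T-\id))_\sharp\mu$ by (iii). For uniqueness of the geodesic in that case I would use the superposition principle: any constant-speed geodesic $(\mu_t)$ in $(\sP(\Om),W_2)$ can be written as $(e_t)_\sharp\eta$ for a probability measure $\eta$ on $C([0,1];\ov\Om)$ concentrated on constant-speed minimizing curves of $\ov\Om$, which -- $\Om$ being convex -- are affine segments; then $\gamma:=(e_0,e_1)_\sharp\eta\in\Pi(\mu,\nu)$ satisfies $\int|x-y|^2\dd\gamma=W_2^2(\mu,\nu)$, hence is optimal, and $\mu_t=\bigl((1-t)\pi_1+t\pi_2\bigr)_\sharp\gamma$; since the optimal plan is unique we get $\gamma=\gamma_T$ and $(\mu_t)$ is determined. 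The main obstacles are precisely the two measure-theoretic steps flagged above -- the rigorous proof that $\supp\gamma_o$ is $c$-cyclically monotone and the superposition principle for geodesics -- while the rest reduces to weak-$*$ compactness, the R\"uschendorf construction, and the a.e. differentiability of convex functions.
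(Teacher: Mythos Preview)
The paper does not give its own proof of this theorem: it is stated as a summary of well-known facts, with a pointer to the textbooks of Villani and Santambrogio (references \cite{villani} and \cite{OTAM} in the paper). Your outline is a correct sketch of the standard argument one finds there --- direct method for primal existence, $c$-cyclical monotonicity and the R\"uschendorf construction for duality and dual existence, convexity of $\tfrac12|x|^2-\vphi$ via a Legendre-type supremum, a.e.\ differentiability of convex functions for Brenier's map, and McCann interpolation together with the lifting/superposition principle for the geodesic structure. The two technical steps you flag (rigorous $c$-cyclical monotonicity and the superposition principle for $W_2$-geodesics) are indeed the only places where real measure-theoretic work hides, and the rest is accurate.

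One small tightening: in your uniqueness argument for the optimal plan, the cleanest route is not to redo the $c$-cyclical monotonicity and potential construction for each optimal $\gamma$, but to use complementary slackness with the \emph{fixed} dual maximizer $(\vphi,\psi)$: for any optimal $\gamma'$ one has $\int(\vphi(x)+\psi(y))\dd\gamma'=B(\mu,\nu)=\tfrac12 A(\mu,\nu)=\int c\dd\gamma'$, and since $\vphi(x)+\psi(y)\le c(x,y)$ pointwise and both sides are continuous, equality holds on $\supp\gamma'$. Then the same subdifferential argument forces $y=T(x)$ for $\gamma'$-a.e.\ $(x,y)$. Your phrase ``this reasoning applies verbatim'' can be read this way, but making the role of the fixed dual maximizer explicit removes any ambiguity.
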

The space of probability measures equipped with the {\it Wasserstein distance} $W_2$ is called the {\it Wasserstein space}. We denote it by $\W_2:=(\sP(\Om),W_2)$.

\vspace{1cm}
Structurally, the following two sections contain our main results. Section \ref{two} is devoted to the uniqueness question for first order models where the main tool is the theory of optimal transport. In Section \ref{three} we investigate the uniqueness issue for second order models, using PDE techniques. Finally, in Section \ref{ss:l1} we discuss an approach that could lead to an $L^1-$contraction result in the first order case as well.

\section{Monotone vector fields in the first order case}\label{two}
Let $\Om\subset\R^d$ be a bounded convex domain. In this section we suppose that the desired velocity  field  $\u:[0,T]\times\Om\to\mathbb{R}^d$ of the crowd is a monotone vector field in $L^2([0,T]\times\Om;\R^d)$, i.e. the following assumption is fulfilled: there exists $\lambda\in\R$ such that for all $t\in[0,T]$ there exists a Borel measurable set $A_t\subseteq\Om$ (possibly depending on $t$) with full measure, i.e. $\sL^d(\Om\setminus A_t)=0$ and
\begin{equation}
\left(\u_t(x)-\u_t(y)\right)\cdot(x-y)\le\l |x-y|^2,\;\;\forall\; x,y\in A_t.
\tag{H1}\label{H1}
\end{equation}

The following contractivity results are not very surprising in the Wasserstein context. In practice we show that (Lemma \ref{positivity})
$$ \{ - \nabla p \; : \;  p \in H^1_{\rho}(\Omega) \} \subseteq \partial_{\mathbb{W}_2} \mathscr{I}_1 ( \rho ), $$
 where $\mathscr{I}_1$ the indicatrix function of $\cK_1$ and $\partial_{\mathbb{W}_2}$ denotes the Wasserstein subdifferential (see \cite{AmbGigSav}): then we exploit the geodesic convexity of $\mathscr{I}_1$ in order to get the contraction properties. However, in order to let the reader understand clearly the proofs we will omit the $\mathbb{W}_2$ technical language. It would be interesting to adapt these tools to more general Hele-Shaw problems. 
 
 Although a first written version of these results is essentially contained in \cite{Mes} (Section 4.3.1), here we simplified and clarified some of the proofs. A key observation is the following lemma (see also Lemma 4.3.13 in \cite{Mes}):

\begin{lemma}\label{positivity}
Let $\Omega$ be a convex bounded domain of $\mathbb{R}^d$ and let
$\rho_0,\rho_1\in\sP(\Omega)$ two absolutely
continuous measures such that $\rho_0\le1$ and $\rho_1\le1$ a.e.
Take a Kantorovich potential $\vphi$ from $\rho_0$ to $\rho_1$
and $p\in H^1_{\rho_0}(\Omega).$  Then
$$\int_{\Omega}\nabla\vphi\cdot\nabla p\dd x=\int_{\Omega}\nabla\vphi\cdot\nabla p\dd\rho_0\ge0.$$
\end{lemma}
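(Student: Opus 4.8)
The plan is to exploit the geometric meaning of the Kantorovich potential $\vphi$: since $\rho_0\ll\sL^d$, Theorem \ref{OT}(iii) tells us that $T(x)=x-\nabla\vphi(x)$ is the optimal transport map pushing $\rho_0$ onto $\rho_1$, and $\frac12|x|^2-\vphi(x)$ is convex on the convex hull of $\Om$, so that $T$ is the gradient of a convex function. The first equality $\int_\Om\nabla\vphi\cdot\nabla p\dd x=\int_\Om\nabla\vphi\cdot\nabla p\dd\rho_0$ is immediate from the definition \eqref{def:h1r} of $H^1_{\rho_0}(\Om)$: the integrand is supported on $\{\nabla p\neq0\}\subseteq\{p>0\}\subseteq\{\rho_0=1\}$ (up to null sets), where $\rho_0\equiv1$, so the two integrals coincide. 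The real content is the sign of $\int_\Om\nabla\vphi\cdot\nabla p\dd\rho_0$.

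For the inequality, the idea is to recognize $-\int_\Om\nabla\vphi\cdot\nabla p\dd\rho_0=\int_\Om(T(x)-x)\cdot(-\nabla p(x))\dd\rho_0(x)$ and to interpret $-\nabla p$ as an admissible velocity for $\rho_0$. Indeed $-\nabla p\in\adm(\rho_0)$ trivially from the definition of $\adm$ (take $q=p$ and use $p\in H^1_{\rho_0}$, noting $\int_\Om(-\nabla p)\cdot\nabla q\dd x=-\int_\Om\nabla p\cdot\nabla q\dd x$, which is $\le 0$ when $q=p$; more generally this is the defining property). The heuristic is that $T-\id$ is the initial velocity of the Wasserstein geodesic from $\rho_0$ to $\rho_1$, both endpoints lie in $\cK_1$, the indicatrix $\sI_1$ of $\cK_1$ is geodesically convex, and $-\nabla p$ lies in the Wasserstein subdifferential of $\sI_1$ at $\rho_0$; pairing a subgradient (which "points out of" the constraint set) with the geodesic direction toward another point of the set gives a nonpositive number, i.e. $\int_\Om(T(x)-x)\cdot(-\nabla p(x))\dd\rho_0\le0$, which is exactly the claim.

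To make this rigorous without the $\W_2$ formalism, I would argue directly along the geodesic $\rho_s=(\id+s(T-\id))_{\sharp}\rho_0$, $s\in[0,1]$, which by Theorem \ref{OT}(iv) stays in $\sP(\Om)$; I first check $\rho_s\le1$ a.e. using convexity of $T$ (monotone transport maps do not concentrate mass: the Jacobian bound, or equivalently the fact that the density of $\rho_s$ is controlled by those of $\rho_0,\rho_1$ via McCann's displacement convexity / the concavity of $\det^{1/d}$ along the interpolation of the two Jacobians), so $\rho_s\in\cK_1$ for all $s$. Then I consider $h(s):=\int_\Om p\dd\rho_s=\int_\Om p(x+s(T(x)-x))\dd\rho_0(x)$. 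Since $p\ge0$ and $p=0$ $\rho_0$-a.e. on $\{\rho_0=1\}$ — hence $h(0)=0$ — while $h(s)\ge0$ for all $s$, the function $h$ has a minimum at $s=0$, so $h'(0^+)\ge0$. Computing $h'(0^+)=\int_\Om\nabla p(x)\cdot(T(x)-x)\dd\rho_0(x)=-\int_\Om\nabla p\cdot\nabla\vphi\dd\rho_0$ gives the inequality. Some care is needed to justify differentiating under the integral sign (e.g. $p\in H^1$, $T-\id\in L^2(\rho_0)$, and a dominated-convergence / absolute-continuity-on-lines argument, using that $\rho_0\le1$ so $\rho_0$-integrability follows from $\sL^d$-integrability).

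The main obstacle I anticipate is precisely this last regularity point: showing $s\mapsto p(x+s(T(x)-x))$ is, for $\rho_0$-a.e. $x$, absolutely continuous with derivative $\nabla p(x+s(T(x)-x))\cdot(T(x)-x)$ and that one may pass the $s$-derivative inside $\int_\Om(\cdot)\dd\rho_0$. This is the standard subtlety of composing an $H^1$ function with a transport interpolation; it can be handled by approximating $p$ with smooth functions $p_\e$ in $H^1(\Om)$, passing to the limit in the identity $h_\e'(0^+)=\int\nabla p_\e\cdot(T-\id)\dd\rho_0$ (the right side converges since $\nabla p_\e\to\nabla p$ in $L^2(\Om)\supseteq L^2(\rho_0)$ as $\rho_0\le 1$, and $T-\id\in L^2(\rho_0)$), while the inequality $h_\e(s)\ge h_\e(0)+o(1)$ is preserved, or alternatively by invoking the known characterization of absolutely continuous curves and their velocities in $\W_2$ from \cite{AmbGigSav}. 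Everything else — the support identity for the first equality, $\rho_s\in\cK_1$, the sign bookkeeping — is routine.
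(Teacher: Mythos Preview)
Your overall strategy---study $h(s)=\int_\Om p\dd\rho_s$ along the Wasserstein geodesic and read off the sign of $h'(0^+)$---is exactly the paper's approach (it proves Lemma~\ref{derivative} precisely to compute $h'(0)$). The first equality is also fine, using $\nabla p=0$ a.e.\ on $\{p=0\}$ and $\{p>0\}\subseteq\{\rho_0=1\}$ as you wrote.

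However, the core step of your direct argument is reversed. You write ``$p=0$ $\rho_0$-a.e.\ on $\{\rho_0=1\}$, hence $h(0)=0$, while $h(s)\ge0$, so $h$ has a minimum at $0$''. This contradicts your own first paragraph: by $p(1-\rho_0)=0$ one has $p=0$ on $\{\rho_0<1\}$, \emph{not} on $\{\rho_0=1\}$. Consequently
\[
h(0)=\int_\Om p\dd\rho_0=\int_{\{\rho_0=1\}}p\dd x=\int_\Om p\dd x,
\]
which in general is strictly positive. Since $\rho_s\le1$ and $p\ge0$ give $h(s)=\int_\Om p\dd\rho_s\le\int_\Om p\dd x=h(0)$, the point $s=0$ is a \emph{maximum} of $h$, so $h'(0^+)\le0$. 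With your (correct) computation $h'(0^+)=-\int_\Om\nabla\vphi\cdot\nabla p\dd\rho_0$, this yields $\int_\Om\nabla\vphi\cdot\nabla p\dd\rho_0\ge0$, which is the lemma. As written, your minimum/$h'(0^+)\ge0$ argument would produce the opposite inequality. The same sign slip occurs in your subdifferential heuristic: it is $\nabla p$ (not $-\nabla p$) that pairs nonpositively with the inward geodesic direction $T-\id=-\nabla\vphi$, giving $\int_\Om\nabla p\cdot(T-\id)\dd\rho_0\le0$, i.e.\ the correct sign. Once this is fixed, your argument coincides with the paper's proof; the differentiation issue you flag is exactly what Lemma~\ref{derivative} handles via the averaging operators $A_t$.
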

To prove this result we consider the following extra lemma:

\begin{lemma}\label{derivative}
Let $\Omega$ be a convex bounded domain of $\mathbb{R}^d$ and let
$\rho_0,\rho_1\in\sP(\Omega)$ two absolutely
continuous measures such that $\rho_0\le 1$ and $\rho_1\le 1$ a.e.
Take a Kantorovich potential $\vphi$ from $\rho_0$ to $\rho_1$
and $p\in H^1(\Omega)$. Let $[0,1]\ni t\mapsto\rho_t$ be the geodesic connecting $\rho_0$ to $\rho_1$, with respect to the $2$-Wasserstein distance $W_2$. Then we have that
$$ \frac { \dd}{\dd t}\Bigg|_{\{t=0\}} \int_{\Omega} p \dd \rho_t  = - \int_{\Omega}\nabla\vphi\cdot\nabla p \dd \rho_0. $$
\end{lemma}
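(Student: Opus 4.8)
The plan is to compute the derivative of $t \mapsto \int_\Omega p \dd \rho_t$ along the geodesic directly from the explicit description of the geodesic given in Theorem \ref{OT}(iv). Since $\rho_0 \ll \sL^d$, the optimal transport map from $\rho_0$ to $\rho_1$ is $T = \id - \nabla \vphi$ and the geodesic is $\rho_t = (T_t)_\sharp \rho_0$ with $T_t := \id + t(T - \id) = \id - t \nabla \vphi$. By the change-of-variables formula for push-forwards,
\begin{equation}
\int_\Omega p \dd \rho_t = \int_\Omega p(T_t(x)) \dd \rho_0(x) = \int_\Omega p\big(x - t\nabla\vphi(x)\big) \dd \rho_0(x).
\end{equation}
First I would differentiate under the integral sign in $t$ at $t=0$: formally the integrand has derivative $-\nabla p(x - t\nabla\vphi(x)) \cdot \nabla\vphi(x)$, which at $t=0$ gives $-\nabla p(x)\cdot\nabla\vphi(x)$, and integrating against $\rho_0$ yields exactly the claimed right-hand side $-\int_\Omega \nabla\vphi\cdot\nabla p \dd\rho_0$.

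The main obstacle is justifying this differentiation, since $p$ is only $H^1(\Omega)$, hence $\nabla p$ is merely $L^2$ and not defined pointwise, so the naive chain rule and dominated convergence do not immediately apply. The key point I would exploit is that $\rho_0 \leq 1$ a.e.\ and, crucially, that $T_t$ is (for $t$ small) a bi-Lipschitz map with Jacobian bounded above and below: indeed $\vphi$ can be chosen so that $\frac12|x|^2 - \vphi(x)$ is convex (Theorem \ref{OT}(iii)), so $\nabla\vphi$ is the difference of $\id$ and a monotone map, and one can moreover truncate/approximate to assume $\nabla\vphi$ is bounded; then $DT_t = I - t D^2\vphi$ stays close to the identity for small $t$. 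This gives that $(T_t)_\sharp \rho_0 = \rho_t \leq 1$ with density controlled, and more importantly that the measures $\rho_0 \circ T_t^{-1}$ and $\rho_0$ are mutually comparable uniformly in small $t$. I would then handle the differentiation by approximating $p$ with smooth functions $p_k \to p$ in $H^1$: for smooth $p_k$ the chain rule is classical and
\begin{equation}
\frac{\dd}{\dd t}\Big|_{t=0} \int_\Omega p_k \dd\rho_t = -\int_\Omega \nabla p_k \cdot \nabla\vphi \dd\rho_0,
\end{equation}
and the uniform density bounds on $\rho_t$ let one pass to the limit in $k$ uniformly in $t$ (or equivalently, write the difference quotient $\frac1t\big(\int p \dd\rho_t - \int p\dd\rho_0\big)$, use that $\rho_t \leq C$ to bound it in terms of $\|p\|_{H^1}$ and a uniform modulus, and apply the convergence $p_k \to p$).

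Alternatively — and this is probably the cleanest route — I would use that $\rho$ is an absolutely continuous curve in $\W_2$ with metric derivative at $t=0$ controlled by $W_2(\rho_0,\rho_1)$, and that its "velocity field" at $t=0$ is exactly $-\nabla\vphi$ (this is standard, e.g.\ from \cite{AmbGigSav}); then for $p \in H^1$ one has the chain rule for the composition of an $H^1$ function with such a flow, giving $\frac{\dd}{\dd t}\int p \dd\rho_t = \int \nabla p \cdot v_t \dd\rho_t$ with $v_0 = -\nabla\vphi$. Either way, I expect the genuine difficulty to be purely the regularity bookkeeping: ensuring the chain rule $\nabla(p \circ T_t) = (DT_t)^\top (\nabla p)\circ T_t$ is valid $\rho_0$-a.e.\ for an $H^1$ function $p$ composed with the bi-Lipschitz map $T_t$ — which holds precisely because $T_t$ maps Lebesgue-null sets to Lebesgue-null sets and $\rho_0$ is absolutely continuous with bounded density — and then controlling the $t\to 0$ limit of the difference quotient uniformly, for which the bound $\rho_t \leq C$ (a consequence of the uniform Jacobian bounds, not of the constraint $\rho_t \le 1$ itself, though that also suffices on $\{T_t \in \cdot\}$) is the essential ingredient. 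I would remark that once this lemma is established, Lemma \ref{positivity} follows immediately by combining it with the fact that $t \mapsto \int_\Omega p \dd\rho_t$ is nonincreasing — since $p \geq 0$, $p = 0$ on $\{\rho_0 = 1\}$, and $\rho_t \leq 1$ forces $\int p \dd\rho_t \leq \int p \dd\rho_0$ — so the derivative at $t=0$ is $\leq 0$, whence $\int \nabla\vphi\cdot\nabla p \dd\rho_0 \geq 0$.
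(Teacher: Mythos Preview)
Your overall strategy—push-forward formula plus approximation by smooth functions, controlled by a uniform density bound on $\rho_t$—is exactly the paper's. However, your primary justification contains an error: the claim that $T_t = \id - t\nabla\vphi$ is bi-Lipschitz with two-sided Jacobian bounds is not correct in general. Semi-concavity of $\vphi$ (equivalently, convexity of $\frac12|x|^2 - \vphi$) gives only $D^2\vphi \leq I$, hence $DT_t \geq (1-t)I$ and a Jacobian \emph{lower} bound, which already yields $\rho_t \leq (1-t)^{-d}$; but $D^2\vphi$ is not bounded below—the convex part $\frac12|x|^2-\vphi$ can have arbitrarily large Hessian—so $\nabla\vphi$ need not be Lipschitz and $T_t$ need not be bi-Lipschitz. (Boundedness of $\nabla\vphi$, which you invoke, is automatic on bounded $\Omega$ since $\vphi$ is Lipschitz, and is not the issue; it is the Lipschitz bound on $\nabla\vphi$ that fails.) Consequently the $H^1$ chain rule $\nabla(p\circ T_t) = (DT_t)^\top(\nabla p)\circ T_t$ via bi-Lipschitz composition is unavailable as stated.

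The paper sidesteps this entirely and never touches $DT_t$. It writes the difference quotient (for each fixed $t$, justified by smooth approximation) as
\[
\frac{1}{t}\int_\Omega \bigl(p(x - t\nabla\vphi(x)) - p(x)\bigr)\dd\rho_0(x) = -\int_\Omega A_t(\nabla p)\cdot\nabla\vphi \dd\rho_0,
\]
where $A_t(h)(x) := \frac{1}{t}\int_0^t h(x - s\nabla\vphi(x))\dd s$ averages $h$ along the displacement ray. The key estimate is $\|A_t\|_{L^2(\Omega)\to L^2_{\rho_0}(\Omega)} \leq 1$, obtained by Jensen together with the change of variables $\int_\Omega |h(x-s\nabla\vphi(x))|^2\dd\rho_0 = \int_\Omega |h|^2\dd\rho_s \leq \int_\Omega |h|^2\dd x$, using only that $\rho_s \leq 1$ for every $s\in[0,1]$ (geodesic convexity of $\cK_1$—precisely your parenthetical alternative). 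Then $A_t(h)\to h$ strongly in $L^2_{\rho_0}$ is immediate for Lipschitz $h$, and extends to all $h\in L^2$ by the uniform operator bound and density. So your instinct that ``$\rho_t \leq 1$ also suffices'' is correct and is in fact the route the paper takes; the averaging-operator packaging is what makes the interchange of limits clean without any bi-Lipschitz or second-derivative analysis of $\vphi$.
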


\begin{proof} We know (using the interpolation introduced by R. McCann, see \cite{mccann} or Theorem \ref{OT} (iv)) that $\rho_t = (x-t \nabla \vphi (x) ) _{\sharp} \rho_0$ for all $t\in[0,1]$ and so we have
\begin{equation*}
\begin{split}
\frac { \dd} { \dd t}\Bigg|_{\{t=0\}} \int_{\Omega} p \dd \rho_t & = \lim_{ t \to 0} \int_{\Omega} \frac{ p(x-t \nabla \vphi (x) ) - p(x) }{t} \dd \rho_0(x) \\
& = - \lim_{t \to 0 } \int_{\Omega} \frac 1t \int_0^t  \nabla p ( x- s \nabla \vphi (x) )  \cdot \nabla \vphi (x) \dd s \dd \rho_0(x) \\
& = - \lim_{t \to 0 } \int_{\Omega}  A_t( \nabla p )   \cdot \nabla \vphi \dd \rho_0(x),
\end{split}
\end{equation*}
where the second equality is easy to prove, for fixed $t$, by approximation via smooth functions and, for $t\in[0,1]$, we denoted  by $A_t : L^2(\Omega;\R^d) \to L^2_{\rho_0}(\Omega;\R^d)$ the linear operator
$$ A_t (h)(x) = \frac 1t \int_0^t h(x- s \nabla \vphi (x) ) \dd s. $$
Now as a general fact we will prove that $A_t(h) \to h $ strongly in $L^2_{\rho_0}(\Omega;\R^d)$ as $t\to 0,$ for every $h \in L^2(\Omega;\R^d)$. First of all it is easy to see that $ \| A_t \| \leq 1$: indeed
 
\begin{equation*}
\begin{split}
\int_{\Omega} |A_t (h)|^2 \dd \rho_0 & \leq \frac 1t \int_{\Omega} \int_0^t  |h(x- s \nabla \vphi (x) )|^2 \dd s \dd \rho_0(x) \\
& = \frac 1t  \int_0^t \int_{\Omega} |h|^2 \dd \rho_s(x)\dd s \leq \int_{\Omega} |h|^2 \dd x.
\end{split}
\end{equation*}
Here we used the fact that since $\rho_0,\rho_1 \leq 1$ a.e we have also $\rho_t \leq 1$ a.e. for all $t\in[0,1]$. Now it is sufficient to note that for every $\e >0 $ there exists a Lipschitz function $h_{\e}$ such that $\|h_{\e} - h \|_{L^2} \leq \e$, and so we have
$$ \| A_t (h) - h \|_{L^2_{\rho_0}} \leq \| A_t ( h- h_{\e}) \|_{L^2_{\rho_0}} + \| h-h_{\e}\|_{L^2_{\rho_0}} + \|A_t (h_{\e}) - h_{\e} \|_{L^2_{\rho_0}} \leq 2 \e + t L \| \nabla \vphi \|_{L^2_{\rho_0}}, $$
where $L$ is the Lipschitz constant of $h_{\e}$. Taking now the limit as $t$ goes to $0$ we obtain $\limsup_{t \to 0} \| A_t( h) - h \|_{L^2_{\rho_0}} \leq 2\e$; by the arbitrariness of $\e>0$ we conclude.

Now it is easy to finish the proof, since $\nabla p  \in L^2(\Omega)$ and so
$$\frac { \dd} { \dd t}\Bigg|_{\{t=0\}} \int_{\Omega} p \dd \rho_t  =-\lim_{t \to 0}  \int_{\Omega} A_t(\nabla p) \cdot \nabla \vphi \dd \rho_0=-\int_{\Omega} \nabla p \cdot \nabla \vphi \dd \rho_0.$$
\end{proof}

\begin{proof}[Proof of Lemma \ref{positivity}] Let $[0,1]\ni t\mapsto\rho_t$ be the Wasserstein geodesic between $\rho_0$ and $\rho_1$. We know that $\rho_t \leq 1$ a.e. for all $t\in[0,1]$ and in particular it is true that 
$$\int_{\Omega} p \dd \rho_t \leq \int_{\Omega} p\dd x  = \int_{\Omega} p \dd \rho_0,$$
which means that the function $\ds [0,1]\ni t\mapsto\int_\Om p \dd \rho_t$ has a local maximum in $t=0,$ hence its derivative at $0$ is non-positive.

Given this, the claim is a consequence of Lemma \ref{derivative}.
\end{proof}

We will also need a regularity lemma on the continuity equation: by definition a curve $\rho_t$ satisfies a continuity equation with velocity $\v_t$ if for every $\phi \in C^1_0(\R^d)$\footnote{we recall that for us $C_0^1(\R^d)$ is the closure of $C_c^{\infty}(\R^d)$ with respect to the norm $\| \phi \|_1 = \| \phi \|_{\infty} + \| \nabla \phi \|_{\infty}$ or, equivalently, the set of $\phi \in C^1(\R^d)$  such that $\lim_{ |x| \to \infty} | \phi (x) | + | \nabla \phi (x)| \to 0$} the application $\ds [0,T]\ni t \mapsto \int_{\R^d} \phi \dd  \rho_t$ is absolutely continuous and its derivative is $\ds\int_{\R^d} \v_t \cdot \nabla \phi \dd  \rho_t$ for almost every time $t$. We will prove that under some integrability assumption there exists a universal full-measure set of differentiability, even when $\phi \in H^1(\R^d)$.

\begin{lemma}\label{lem:diffB} Let $\rho_t$ be a weakly continuous curve of probability measures on $\R^d$; let us suppose that the continuity equation $\partial_t \rho_t + \nabla \cdot ( \rho_t \v_t)=0$ holds with a velocity field $\v_t$ such that $\ds\int_0^T\int_{\R^d} | \v_t| \dd  \rho_t \dd t < +\infty$. Then there exists a set $\Tau \subset [0,T]$ such that $ \mathscr{L}^1 ( [0,T] \setminus \Tau ) =0$ and
\begin{equation}\label{eq:deriv} \lim_{ h \to 0} \frac 1h \left( \int_{\R^d} \phi \dd  \rho_{t+h} - \int_{\R^d} \phi \dd  \rho_t \right) = \int_{\R^d} \nabla \phi \cdot \v_t \dd  \rho_t \qquad \forall t \in \Tau
\end{equation}
for every $\phi \in C_0^1(\R^d)$. Moreover if we further have that $\rho_t \leq 1$ a.e. and $\ds\int_0^T\int_{\R^d} | \v_t|^2 \dd  \rho_t \dd t < +\infty$ then we can require that \eqref{eq:deriv} also holds for every $\phi \in H^1(\R^d)$.
\end{lemma}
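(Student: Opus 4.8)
The plan is to establish the conclusion in two stages: first the statement for $\phi \in C_0^1(\R^d)$, then the upgrade to $\phi \in H^1(\R^d)$ under the extra hypotheses. For the first stage, the key point is that the map $t \mapsto \int_{\R^d}\phi\dd\rho_t$ is, by hypothesis, absolutely continuous for each fixed $\phi$, with a.e. derivative $\int_{\R^d}\nabla\phi\cdot\v_t\dd\rho_t$. The issue is that the null set of non-differentiability a priori depends on $\phi$, and we want a single null set working simultaneously for all $\phi \in C_0^1(\R^d)$. First I would pick a countable set $\mathcal{D} \subset C_c^\infty(\R^d)$ that is dense in $C_0^1(\R^d)$ for the $\|\cdot\|_1$ norm; the Lebesgue differentiation theorem, applied to each of the countably many integrable functions $t \mapsto \int \nabla\phi\cdot\v_t\dd\rho_t$ (integrability follows from $\int_0^T\int|\v_t|\dd\rho_t\dd t < \infty$ and $\|\nabla\phi\|_\infty < \infty$), gives a common full-measure set $\Tau$ of Lebesgue points. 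On $\Tau$ one also wants, for each $t$, control of the quantity $\int |\v_t|\dd\rho_t$ at $t$ itself; so I would additionally intersect with the Lebesgue points of $t \mapsto \int_{\R^d}|\v_t|\dd\rho_t$.

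The density argument then runs as follows. Fix $t \in \Tau$ and $\phi \in C_0^1(\R^d)$, and take $\phi_n \in \mathcal{D}$ with $\|\phi_n - \phi\|_1 \to 0$. Write the difference quotient $h^{-1}(\int\phi\dd\rho_{t+h} - \int\phi\dd\rho_t)$, split $\phi = \phi_n + (\phi - \phi_n)$, and use the exact integral identity
\begin{equation*}
\frac 1h\left(\int_{\R^d}\psi\dd\rho_{t+h} - \int_{\R^d}\psi\dd\rho_t\right) = \frac 1h\int_t^{t+h}\int_{\R^d}\nabla\psi\cdot\v_s\dd\rho_s\dd s
\end{equation*}
valid for every $\psi \in C_0^1(\R^d)$ (this is just the absolute continuity statement integrated). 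For $\psi = \phi - \phi_n$ the right-hand side is bounded by $\|\nabla(\phi - \phi_n)\|_\infty \cdot \frac 1h\int_t^{t+h}\int|\v_s|\dd\rho_s\dd s$, which by the choice of $t$ as a Lebesgue point of $\int|\v_s|\dd\rho_s$ converges, as $h \to 0$, to $\|\nabla(\phi-\phi_n)\|_\infty \int_{\R^d}|\v_t|\dd\rho_t$, hence is small uniformly in small $h$ once $n$ is large. For $\psi = \phi_n$ the difference quotient converges to $\int\nabla\phi_n\cdot\v_t\dd\rho_t$ since $t \in \Tau$; and $\int\nabla\phi_n\cdot\v_t\dd\rho_t \to \int\nabla\phi\cdot\v_t\dd\rho_t$ by uniform convergence of gradients and finiteness of $\int|\v_t|\dd\rho_t$. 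Combining the three estimates with a standard $3\varepsilon$ argument yields \eqref{eq:deriv}.

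For the second stage, assume additionally $\rho_t \leq 1$ a.e. and $\int_0^T\int|\v_t|^2\dd\rho_t\dd t < \infty$. Now $\rho_t \leq \sL^d$, so for $\psi \in H^1(\R^d)$ all the quantities $\int\psi\dd\rho_t$, $\int\nabla\psi\cdot\v_t\dd\rho_t$ make sense, the latter being controlled by $\|\nabla\psi\|_{L^2}\left(\int|\v_t|^2\dd\rho_t\right)^{1/2}$ thanks to $\rho_t \leq 1$. I would enlarge $\Tau$ to also contain the Lebesgue points of the integrable function $t \mapsto \int|\v_t|^2\dd\rho_t$. Then I repeat the density argument, this time approximating $\phi \in H^1(\R^d)$ by $\phi_n \in C_c^\infty(\R^d)$ in the $H^1$ norm: the error term becomes $\|\nabla(\phi-\phi_n)\|_{L^2}\cdot\frac 1h\int_t^{t+h}\left(\int|\v_s|^2\dd\rho_s\right)^{1/2}\dd s$; by Jensen's inequality this is at most $\|\nabla(\phi-\phi_n)\|_{L^2}\left(\frac 1h\int_t^{t+h}\int|\v_s|^2\dd\rho_s\dd s\right)^{1/2}$, which tends to $\|\nabla(\phi-\phi_n)\|_{L^2}\left(\int|\v_t|^2\dd\rho_t\right)^{1/2}$ by the Lebesgue point property. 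One also needs to know that the exact integral identity still holds for the $C_c^\infty$ approximants and that the difference quotients for $\phi_n$ still converge on $\Tau$ — both are already in hand from the first stage. Passing to the limit as before gives \eqref{eq:deriv} for $\phi \in H^1(\R^d)$.

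The main obstacle is the interchange of the $h \to 0$ and $n \to \infty$ limits in the error term: one cannot simply bound $\frac 1h\int_t^{t+h}(\cdots)\dd s$ by a constant uniform in $h$ without using that $t$ is a Lebesgue point of $\int|\v_s|\dd\rho_s$ (resp. $\int|\v_s|^2\dd\rho_s$), since the integrand is only $L^1$ in time. Making the null set $\Tau$ capture all the relevant Lebesgue-point conditions simultaneously — those for the countably many $\int\nabla\phi_n\cdot\v_s\dd\rho_s$ and those for $\int|\v_s|^{q}\dd\rho_s$ with $q \in \{1,2\}$ — is the bookkeeping that makes the argument work, and is the point where one must be careful. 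Everything else is routine density and uniform-convergence manipulation.
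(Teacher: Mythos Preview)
Your proposal is correct and follows essentially the same route as the paper: choose a countable dense family, take the common Lebesgue points of the functionals $t\mapsto\int\nabla\phi_n\cdot\v_t\dd\rho_t$ together with those of $t\mapsto\int|\v_t|\dd\rho_t$ (resp.\ $\int|\v_t|^2\dd\rho_t$), and extend to the whole space by a $3\varepsilon$ density argument. The paper packages this as a single abstract statement---for any separable Banach space $B$ and any $x^*\in L^1([0,T];B^*)$ there is a full-measure set of common Lebesgue points for all maps $t\mapsto x^*_t(b)$---and then specializes to $B=C_0^1(\R^d)$ and $B=H^1(\R^d)$; but the content is identical to what you wrote. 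One wording slip: when you say ``enlarge $\Tau$'' you mean \emph{further intersect} it with the new set of Lebesgue points.
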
 

\begin{proof} Let us prove the following general statement: for a given separable Banach space $B$ and a curve $x^*  \in L^1([0,T];B^*)$, there exists  $\Tau \subset [0,T]$ such that $\Tau$ is a set of Lebesgue points for the map $t \mapsto x^*_t(b)$ for every $b \in B$, and moreover $\mathscr{L}^1 ( [0,T] \setminus \Tau )=0$. This can be proven easily by choosing a dense subset $(b_n) \subset B$ and then taking $\Tau_n$ as the set of Lebesgue points of $t \mapsto x^*_t(b_n)$ and $\Tau_0$ as the Lebesgue points of $t \mapsto \| x^*_t \|$, and then we take $ \Tau = \bigcap_{ n \geq 0} \Tau_n$. For every $b \in B$ and $\e>0$ let us consider $i\in\mathbb{N}$ such that $\|b_i - b\| \leq \e$ and then taking $t \in \Tau$ we have
$$  \frac 1{2\delta} \int_{t-\delta}^{t+\delta}| x^*_s(b) - x^*_t(b)| \dd s  \leq \e \Bigl( \|x^*_t\|  + \frac 1{2\delta}\int_{t-\delta}^{t+\delta} \| x^*_s\| \dd s \Bigr)  +   \frac 1{2\delta} \int_{t-\delta}^{t+\delta} |x^*_s(b_i)  - x^*_t(b_i) | \dd  s. $$
Now, taking the limit as $\delta \to 0$ and using the properties of $\Tau$ we get
$$ \limsup_{\delta \to 0}   \frac 1{2\delta} \int_{t-\delta}^{t+\delta} |x^*_s(b)  - x^*_t(b)| \dd  s \leq 2 \e \| x^*_t\|$$
and by the arbitrariness of $\e$ we conclude that $t$ is a Lebesgue point for $x^*_s(b)$. Now it is easy to conclude thanks to the fact that we know that
$$ \int_{\R^d} \phi \dd  \rho_{t+h} - \int_{\R^d} \phi \dd  \rho_t = \int_t^{t+h} \int_{\R^d} \nabla \phi \cdot \v_s \dd  \rho_s, $$
and noticing that $\ds x^*_s : \phi \mapsto \int_{\R^d} \nabla \phi \cdot \v_s \dd  \rho_s$ satisfies the assumption $x^* \in L^1([0,T];B^*)$, when $B$ is the Banach space $C^1_0(\R^d)$ in the first case and  $H^1(\R^d)$ in the second case. Notice that if we follow the construction of $\Tau$ for the case $B=C^1_0(\R^d)$, this set of times will work also for $H^1(\R^d)$ since $C^1_0$ is dense in $H^1$.
\end{proof}

Now we are in position to prove the main theorem of this section, namely:
\begin{theorem}\label{thm:unique_first}
Suppose $\Omega\subset\mathbb{R}^d$ is a bounded convex domain,
$\u$ is a vector field satisfying Assumption \eqref{H1} and let $\rho_0\in\cK_1$ be an admissible
initial density.  Let us suppose that there exist $(\rho^1,p^1),(\rho^2,p^2)$ two solutions to the system
\begin{equation}\label{fp2}
\left\{\begin{array}{ll}
\partial_t\rho_t+\nabla\cdot(\rho_t(\u_t-\nabla p_t))=0 & {\rm{in\ }}(0,T)\times\Om\\[7pt]
\rho_t \le 1,\; p_t\ge0,\; (1-\rho_t)p_t=0 &{\rm{a.e.\ in\ }}[0,T]\times\Om, \\[7pt]
\rho|_{t=0}=\rho_0,
\end{array} \right.
\end{equation}
$p^i\in L^2( [0,T]; H^1(\Omega))$ and $\rho_i \in \sP (\Om)$ for $i\in\{1,2\}$, where the first equation is supposed to be satisfied in duality with $C^{\infty}_c(\R^d)$ (see Definition \ref{def:11}) in order to take into account the boundary conditions. Then, $\rho^1=\rho^2$ and $p^1=p^2$ a.e. In particular, under the same assumptions, we can say that there exists a unique pair $(\rho,\v)$ that solves \eqref{fp0}.
\end{theorem}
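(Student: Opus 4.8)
The plan is to run the classical optimal-transport argument: differentiate the squared Wasserstein distance between two solutions and close the estimate by Grönwall, using Lemma \ref{positivity} to absorb the pressure terms and Assumption \eqref{H1} to produce the $\lambda$-factor. Set $\Phi(t):=\tfrac12 W_2^2(\rho^1_t,\rho^2_t)$. For a.e.\ $t$ each $\rho^i_t$ is an absolutely continuous probability measure with density $\le 1$, and both $\rho^1,\rho^2$ are absolutely continuous curves in $\W_2$; hence, by $|W_2(\rho^1_t,\rho^2_t)-W_2(\rho^1_s,\rho^2_s)|\le W_2(\rho^1_t,\rho^1_s)+W_2(\rho^2_t,\rho^2_s)$, the map $\Phi$ is absolutely continuous, hence differentiable at a.e.\ $t$. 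Note that $\v^i_t:=\u_t-\nabla p^i_t$ satisfies $\int_0^T\!\!\int_\Omega|\v^i_t|^2\dd\rho^i_t\dd t<+\infty$ since $\rho^i_t\le 1$, $\u\in L^2([0,T]\times\Omega)$ and $p^i\in L^2([0,T];H^1(\Omega))$; therefore Lemma \ref{lem:diffB}, in its $H^1$-version (precisely where $\rho^i_t\le 1$ is used), yields a common full-measure set of times $t$ at which $h\mapsto\int_\Omega\phi\dd\rho^i_{t+h}$ is differentiable at $h=0$ with derivative $\int_\Omega\nabla\phi\cdot\v^i_t\dd\rho^i_t$, for every $\phi\in H^1(\Omega)$ and $i=1,2$. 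Fix such a $t$ that is also a differentiability point of $\Phi$, and, by Theorem \ref{OT}(iii), pick Lipschitz Kantorovich potentials $\varphi_t$ (from $\rho^1_t$ to $\rho^2_t$) and $\psi_t$ (from $\rho^2_t$ to $\rho^1_t$), so that $T_t:=\id-\nabla\varphi_t$ pushes $\rho^1_t$ onto $\rho^2_t$, $S_t:=\id-\nabla\psi_t$ pushes $\rho^2_t$ onto $\rho^1_t$, and they are mutually inverse $\rho^1_t$-a.e., whence $\nabla\psi_t(T_t(x))=T_t(x)-x$ for $\rho^1_t$-a.e.\ $x$. Since $(\varphi_t,\psi_t)$ is admissible for \eqref{D} at every time and optimal at $t$, the function $h\mapsto\Phi(t+h)-\big(\int_\Omega\varphi_t\dd\rho^1_{t+h}+\int_\Omega\psi_t\dd\rho^2_{t+h}\big)$ is nonnegative and vanishes at $h=0$; both terms being differentiable at $0$, their derivatives coincide, giving
$$\Phi'(t)=\int_\Omega\nabla\varphi_t\cdot(\u_t-\nabla p^1_t)\dd\rho^1_t+\int_\Omega\nabla\psi_t\cdot(\u_t-\nabla p^2_t)\dd\rho^2_t.$$

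Next I would estimate the two contributions. For the drift part, transporting the second integral back through $T_t$ and using $\nabla\psi_t(T_t(x))=T_t(x)-x$ and $\nabla\varphi_t(x)=x-T_t(x)$,
$$\int_\Omega\nabla\varphi_t\cdot\u_t\dd\rho^1_t+\int_\Omega\nabla\psi_t\cdot\u_t\dd\rho^2_t=\int_\Omega\big(\u_t(x)-\u_t(T_t(x))\big)\cdot\big(x-T_t(x)\big)\dd\rho^1_t(x)\le\lambda\,W_2^2(\rho^1_t,\rho^2_t),$$
the last step being Assumption \eqref{H1}, applicable because $x$ and $T_t(x)$ lie in $A_t$ for $\rho^1_t$-a.e.\ $x$ (both $\rho^1_t,\rho^2_t$ being absolutely continuous). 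For the pressure part, Lemma \ref{positivity} with the Kantorovich potential $\varphi_t$ from $\rho^1_t$ to $\rho^2_t$ and $p^1_t\in H^1_{\rho^1_t}(\Omega)$ gives $\int_\Omega\nabla\varphi_t\cdot\nabla p^1_t\dd\rho^1_t\ge 0$, and symmetrically $\int_\Omega\nabla\psi_t\cdot\nabla p^2_t\dd\rho^2_t\ge 0$. Hence $\Phi'(t)\le 2\lambda\,\Phi(t)$ for a.e.\ $t$; since $\Phi(0)=0$ and $\Phi\ge 0$, Grönwall's inequality forces $\Phi\equiv 0$, that is $\rho^1_t=\rho^2_t=:\rho_t$ for all $t$.

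It remains to identify the pressures. Subtracting the two weak formulations (now with the same $\rho$) and using that $p^i_t\in H^1_{\rho_t}(\Omega)$ implies $\nabla p^i_t=0$ a.e.\ on $\{\rho_t<1\}$, hence $\rho_t\nabla p^i_t=\nabla p^i_t$ a.e., one gets $\int_\Omega\nabla(p^1_t-p^2_t)\cdot\nabla\phi\dd x=0$ for all $\phi\in C_c^\infty(\R^d)$ and a.e.\ $t$. Thus $p^1_t-p^2_t$ is harmonic with vanishing Neumann data on the connected domain $\Omega$, hence constant in space; since $|\Omega|>1$ and $\rho_t$ is a probability density bounded by $1$, the set $\{\rho_t<1\}$ has positive measure, on which $p^1_t=p^2_t=0$, so the constant is $0$ and $p^1=p^2$ a.e. Finally, the statement on $(\rho,\v)$ follows because any solution of \eqref{fp0} satisfies $\v_t=\u_t-\nabla p_t$ with $(\rho,p)$ a solution of \eqref{fp2} (Lemma \ref{lem:projection}).

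The main obstacle is the rigorous justification of the differentiation formula for $\Phi$. The velocities $\u_t-\nabla p^i_t$ are only $L^2$ in space, so the time-dependent Lipschitz Kantorovich potentials cannot be tested naively against the weak continuity equation: one needs exactly the $H^1$-refinement of Lemma \ref{lem:diffB}, whose validity rests on the density bound $\rho_t\le 1$. The second delicate point is the $t$-dependence of the potentials, handled through the ``minimum at $h=0$'' comparison rather than by differentiating under a moving optimal coupling. Once these are settled, the rest is just combining Lemma \ref{positivity} (nonnegativity of the pressure terms) with Assumption \eqref{H1} (monotonicity, giving the $\lambda$-factor), followed by Grönwall.
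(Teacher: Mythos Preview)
Your proposal is correct and follows essentially the same route as the paper: differentiate $\tfrac12 W_2^2(\rho^1_t,\rho^2_t)$ via the Kantorovich-duality ``minimum at $h=0$'' argument together with Lemma~\ref{lem:diffB}, drop the pressure terms using Lemma~\ref{positivity}, bound the drift contribution by $\lambda W_2^2$ via \eqref{H1} after pulling back through the optimal map, apply Gr\"onwall, and then identify the pressures by a harmonicity argument. The only cosmetic difference is in the last step: the paper invokes analyticity of the harmonic difference $p^1_t-p^2_t$ to conclude it vanishes from its zero set of positive measure, whereas you first deduce it is constant from the weak Neumann problem and then fix the constant to zero on $\{\rho_t<1\}$; both are valid.
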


\begin{proof}

We associate to the two curves $\rho^1_t$ and $\rho^2_t$ a continuity equation \eqref{fp1} with the corresponding vector fields $\v^1_t=\u^1_t- \nabla p^1_t$ and $\v^2_t=\u^2_t- \nabla p^2_t $. Let us compute and estimate
$\ds\frac{\dd}{\dd t}\frac{1}{2}W_2^2(\rho^1_t,\rho^2_t)$; we refer also to \cite[Theorem 8.4.7]{AmbGigSav} for a more general statement, but we prefer to include a simpler proof in this case. We know that $t \mapsto W_2^2(\rho^1_t, \rho^2_t)$ is absolutely continuous: let us consider a time $t$ for which its derivative exists and also such that $t \in \Tau^1 \cap \Tau^2$, where $\Tau^i$ is the set for which \eqref{eq:deriv} is satisfied for the continuity equation for $\rho^i$. Then we know that, for all $s \in [0,T]$ we have 
\begin{equation}\label{eq:Kdua} \frac12 W_2^2 (\rho^1_s,\rho^2_s) \geq \int_{\Om} \vphi_t \dd  \rho^1_s + \int_{\Om} \psi_t \dd  \rho^2_s,
\end{equation}
where $(\vphi_t,\psi_t)$ is a pair of Kantorovich potentials for $\rho^1_t$ and $\rho^2_t$. In particular we have equality in \eqref{eq:Kdua} for $t=s$ and so, since both sides are differentiable by hypothesis, their derivatives are equal. Hence we get
$$\frac{\dd}{\dd t}\frac{1}{2}W_2^2(\rho^1_t,\rho^2_t) = \int_{\Omega}\v^1_t \cdot \nabla \vphi_t \dd\rho^1_t +  \int_{\Omega}\v^2_t \cdot \nabla \psi_t \dd\rho^2_t,$$
 where we used the fact that, since $\Omega$ is bounded,  we can assume $\vphi_t$ and $\psi_t$ to be Lipschitz and in particular they belong to $H^1$, which allows to use Lemma \ref{lem:diffB}.

We also know that thanks to Theorem \ref{OT} there is a pair of optimal
transport maps $T_t(x)=x-\nabla \vphi_t(x)$ and $S_t(y)=y - \nabla \psi_t(y)$ such that $(T_t)_\sharp\rho^1_t=\rho^2_t$ and $(S_t)_\sharp \rho^2_t = \rho^1_t$ for all $t\in[0,T]$; moreover $T_t$ is the inverse of $S_t$ (in the appropriate almost everywhere sense). Using this, we have $\nabla \psi_t(T_t(x)) = -\nabla \vphi_t(x)$ and in particular using the change of variable formula $y=T_t(x)$ we get
$$ \int_{\Omega} \nabla \psi_t (y) \cdot \u_t (y) \dd \rho_t^2(y) = - \int_{\Omega} \nabla \vphi_t(x) \cdot \u_t ( T_t(x)) \dd \rho_t^1(x).$$

We can use this to split the  formula for the derivative of $W^2_2(\rho^1_t,\rho^2_t)/2$. We use the result of Lemma \ref{positivity} and then rewrite the term regarding $\u_t$ in terms of transport maps and see:
\begin{align*}
\frac{\dd}{\dd t}\frac{1}{2}W_2^2(\rho^1_t,\rho^2_t)&=\int_{\Omega}\v^1_t \cdot \nabla \vphi_t \dd\rho^1_t +  \int_{\Omega}\v^2_t \cdot \nabla \psi_t \dd\rho^2_t \\
&= \int_{\Omega}\u_t \cdot \nabla \vphi_t \dd\rho^1_t +  \int_{\Omega}\u_t \cdot \nabla \psi_t \dd\rho^2_t  - \int_{\Omega}\nabla p^1_t \cdot \nabla \vphi_t \dd\rho^1_t -  \int_{\Omega}\nabla p^2_t \cdot \nabla \psi_t \dd\rho^2_t \\
& \leq \int_{\Omega} \nabla \vphi_t(x) \cdot \bigl[\u_t (x) - \u_t(T_t(x))\bigr] \dd \rho^1_t  \\ 
& \leq \int_{\Omega} (x-T_t(x)) \cdot \bigl[\u_t (x) - \u_t(T_t(x))\bigr] \dd \rho^1_t \\
& \leq \lambda \int_{\Omega} | x- T_t(x)|^2 \dd \rho^1_t \leq \lambda W_2^2(\rho^1_t, \rho^2_t).
\end{align*}

%
Gr\"onwall's lemma implies that
$$W_2^2(\rho^1_t,\rho^2_t)\le e^{2\lambda t}W_2^2(\rho^1_0,\rho^2_0).$$
Since $\rho_0^1=\rho_0^2=\rho_0$ a.e., the above property implies that $\rho^1=\rho^2$ a.e. in $[0,T]\times\Om.$ From this fact we can easily deduce that $\Delta(p_t^1-p_t^2)=0,$ for a.e. $t\in[0,T]$ in the sense of distributions. In particular $p_t^1-p_t^2$ is analytic in the interior of $\Om.$ Moreover, both $p^1_t$ and $p^2_t$ vanish a.e. in the set $\{\rho_t^1<1\}$ which has a positive Lebesgue measure greater than $|\Om|-1>0.$ Thus, $p^1=p^2$ a.e. in $[0,T]\times\Om.$ The claim follows.
\end{proof}


\begin{remark}
The existence result for system \eqref{fp2} was obtained in different settings in the literature. On the one hand, if $\u=-\nabla D$ (for a reasonably regular potential $D$), the existence and uniqueness of a pair $(\rho,p)$ can be obtained by gradient flow techniques in $\W_2(\Om)$ (see \cite{AmbGigSav, MauRouSan1, aude:phd}), under the assumption that $\u$ is monotone that translates into a $\lambda$-convexity for $D$. On the other hand, if $\u$ is a general field with $C^1$ regularity, the existence result is proven with the help of a well-chosen splitting algorithm (see \cite{MauRouSanVen, aude:phd}). 

Nevertheless, combining the techniques developed in \cite{MesSan} on the one hand, and the well-known DiPerna-Lions-Ambrosio theory on the other hand, we expect to obtain existence result for \eqref{fp2} for more general vector fields with merely Sobolev regularity and suitable divergence bounds.
\end{remark}

\begin{remark}
The monotonicity assumption \eqref{H1} is not surprising in this setting. We remark that the same assumption was required in \cite{NatPelSav} to prove  contraction properties for a general class of transport costs along the solution of the Fokker-Planck equation in $\R^d$
$$\partial_t\rho-\Delta\rho+\diver(B\rho)=0,\;\;\;\rho|_{t=0}=\rho_0,$$
where the velocity field $B:\R^d\to\R^d$ was supposed to satisfy the monotonicity property \eqref{H1}.
\end{remark}

We note also the fact that we can allow for moving domains $\Omega_t$ (considering always a no-flux boundary condition): in fact our proof never uses that the domains are fixed but uses only convexity at any fixed time. This generalization has been used in \cite{DiMoSa} for proving uniqueness for an evolution equation with density constraint driven only by the boundary of the moving sets.

\section{Bounded vector field in the diffusive case}\label{three}
We use Hilbert space techniques (similarly to the one developed in \cite{Cro, Bou, PetQuiVaz}; see also Section 3.1. from \cite{Por}) to study the uniqueness of a solution of the diffusive crowd motion model with density constraints described in Subsection \ref{diffusive} (see also \cite{MesSan}). Moreover we can expect that this holds under more general assumptions in the presence of a non-degenerate diffusion in the model. 

Let  $\u:[0,T]\times\Om\to\R^d$ be a given vector field, which represents again the desired velocity field of the crowd, $\Om\subset\R^d$ a  bounded open set with $C^1$ boundary, $\rho_0\in\sP(\Om)$ the initial density of the population such that $0\le\rho_0\le 1$ a.e. in $\Om$ and let us consider the following problem
\begin{equation}\label{2nd_1}
\left\{
\begin{array}{rl}
\partial_t\rho_t-\Delta\rho_t+\diver(P_{\adm(\rho_t)}[\u_t]\rho_t)=0, & \iin\ (0,T)\times\Om;\\[10pt]
\rho|_{t=0}=\rho_0,\;\;\; 0\le \rho_t\le 1,  & \ae\ \iin\ \Om,
\end{array}
\right.
\end{equation} 
equipped with the natural no flux boundary condition.
Introducing the pressure variable, equivalently the above system can be written as
\begin{equation}\label{2nd_2}
\left\{
\begin{array}{rl}
\partial_t\rho_t-\Delta\rho_t-\Delta p_t+\diver(\u_t\rho_t)=0, & \iin\ (0,T)\times\Om,\\[10pt]
\rho|_{t=0}=\rho_0, & \iin\ \Om\\[10pt]
(\nabla\rho_t+\nabla p_t -\u_t\rho_t)\cdot\n = 0, & \oon\ \partial\Om,\ {\rm{for\ a.e.}}\ t\in[0,T],
\end{array}
\right.
\end{equation} 
for a pressure field $p_t\in H^1_{\rho_t}(\Om)$.
It has been shown in \cite{MesSan} that under the assumption that 
\begin{equation}\label{eq:H2}
\u\in L^{\infty}([0,T]\times\Om;\R^d)\tag{H2}
\end{equation}
the systems \eqref{2nd_1} and \eqref{2nd_2} have a solution. More precisely there exist an absolutely continuous curve $[0,T]\ni t\mapsto\rho_t\in \W_2$ and  $p_t\in H^1_{\rho_t}(\Om)$ for a.e. $t\in[0,T]$ (in particular $\rho\in L^\infty([0,T]\times\Om)$ and $p\in L^2([0,T];H^1(\Om))$ such that $(p,\rho)$ solves \eqref{2nd_2} in weak sense (see \eqref{2nd_dual}). 

Our aim in this section is to show the following theorem: 
\begin{theorem}
Let $\u$ satisfy \eqref{eq:H2}. Then there exists a unique pair $(\rho,p)\in L^\infty([0,T]\times\Om)\times L^2([0,T]; H^1(\Om))$ that solves \eqref{2nd_2} in the weak sense \eqref{2nd_dual}. Moreover for every solution $(\rho^1,p^1)$ and $(\rho^2,p^2)$ to  \eqref{2nd_2} we have the $L^1-$contraction property
$$ \int_{\Om} | \rho^1(T,x) - \rho^2(T,x)| \dd x \leq \int_{\Om} |\rho^1(0,x) - \rho^2(0,x)| \dd x.$$
\end{theorem}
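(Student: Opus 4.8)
The plan is to prove uniqueness and the $L^1$-contraction simultaneously by passing to the dual (adjoint) problem, following the Hilbertian/Holmgren-type duality strategy of \cite{Cro,Bou}. First I would take two weak solutions $(\rho^1,p^1)$ and $(\rho^2,p^2)$ of \eqref{2nd_2} with the same field $\u$, and set $\sigma=\rho^1-\rho^2$, $q=p^1-p^2$. Subtracting the two weak formulations, $\sigma$ satisfies, in the weak sense with the no-flux boundary condition,
\begin{equation*}
\partial_t\sigma-\Delta\sigma-\Delta q+\diver(\u\,\sigma)=0,\qquad \sigma|_{t=0}=\rho^1_0-\rho^2_0.
\end{equation*}
The key structural observation (the monotonicity of $\beta$) is that $q\,\sigma=(p^1-p^2)(\rho^1-\rho^2)\ge0$ a.e.: indeed wherever $\rho^i<1$ one has $p^i=0$, and the sign of $p^1-p^2$ matches the sign of $\rho^1-\rho^2$ on the saturated set. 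Hence we may write $q=\mu\,\sigma$ for some measurable $\mu\ge0$ (with $\mu\in L^\infty$ only if we have extra regularity, which we do not — this is precisely why we must dualize rather than test against $\mathrm{sgn}(\sigma)$ directly). So $\sigma$ solves $\partial_t\sigma-\diver((1+\mu)\nabla\sigma \text{-type terms})\dots$; more precisely $\sigma$ solves $\partial_t\sigma-\Delta\big((1+a)\sigma\big)+\diver(\u\,\sigma)=0$ after absorbing, where $a=\mu\ge0$ is a nonnegative but merely $L^\infty_t L^1_x$-type coefficient — in any case a bounded-above-by-nothing-nice coefficient, so the point is to keep it only as a \emph{nonnegative} weight.

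Next I would introduce, for an arbitrary test function $\chi\in C^\infty_c((0,T)\times\Om)$ (or $\chi$ smooth with $\partial_n\chi=0$ on $\partial\Om$), the \emph{dual problem}: find $\varphi$ solving the backward parabolic equation
\begin{equation*}
-\partial_t\varphi-(1+a)\Delta\varphi-\u\cdot\nabla\varphi=(1+a)\,\chi\qquad\text{in }(0,T)\times\Om,\quad \varphi(T,\cdot)=0,\quad \partial_n\varphi=0\text{ on }\partial\Om,
\end{equation*}
or rather the correctly-transposed version so that $\int_0^T\int_\Om \sigma\,(1+a)\chi = \int_\Om \sigma(0,\cdot)\,\varphi(0,\cdot)$ after integrating the weak formulation of $\sigma$ against $\varphi$. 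The point of non-degenerate diffusion is that this backward equation is \emph{uniformly parabolic} — its leading coefficient $1+a\ge 1$ is bounded below — so standard parabolic theory (energy estimates, Ladyzhenskaya–Solonnikov–Uraltseva / Aronson-type bounds, exactly the ``well-known parabolic estimates'' invoked in the abstract) gives existence of a weak solution $\varphi$ together with the crucial \emph{maximum principle bound} $\|\varphi\|_{L^\infty}\le T\|\chi\|_{L^\infty}$ and, if one chooses $\chi$ approximating $\mathrm{sgn}(\sigma(T,\cdot))$, one recovers exactly the $L^1$ inequality at time $T$. In practice I would first prove existence of $\varphi$ for \emph{smooth strictly positive} coefficients $1+a_\varepsilon$ with $a_\varepsilon\to a$, get the $\varepsilon$-uniform $L^\infty$ bound on $\varphi_\varepsilon$ and a uniform energy bound, pass to the limit, and only then plug into the $\sigma$-equation.

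The duality pairing then yields, for every admissible $\chi\ge 0$,
\begin{equation*}
\int_0^T\!\!\int_\Om \sigma\,(1+a)\,\chi\,\dd x\,\dd t=\int_\Om (\rho^1_0-\rho^2_0)\,\varphi(0,\cdot)\,\dd x\le \|\varphi(0,\cdot)\|_{L^\infty}\,\|\rho^1_0-\rho^2_0\|_{L^1}\le T\|\chi\|_{L^\infty}\|\rho^1_0-\rho^2_0\|_{L^1},
\end{equation*}
and optimizing over $\chi$ (taking $\chi$ to concentrate near $t=T$ and to approximate $\mathrm{sgn}^+(\sigma)$, while $a\ge0$ only helps) gives $\int_\Om(\rho^1(T,\cdot)-\rho^2(T,\cdot))_+\le\|\rho^1_0-\rho^2_0\|_{L^1}$; a symmetric argument gives the negative part, hence the stated $L^1$-contraction, and in particular uniqueness when $\rho^1_0=\rho^2_0$. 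The main obstacle I expect is making the duality argument rigorous \emph{given the low regularity of $a$ and of $\sigma$}: justifying that the weak solution $\sigma$ can actually be tested against the dual solution $\varphi$ (which is only as regular as uniformly-parabolic theory with $L^\infty$ coefficients allows — typically $\varphi\in L^2_tH^1_x\cap C_tL^2_x$ plus Hölder continuity by De Giorgi–Nash–Moser, but not better), and handling the boundary terms from the no-flux condition; the standard remedy is to regularize $a$, solve the dual problem with smooth coefficients, obtain all estimates uniformly, and pass to the limit, exploiting that the only property of $a$ used quantitatively is $a\ge 0$ together with $a\in L^\infty$ (which follows here because $p^1,p^2\in L^\infty$ is \emph{not} given — so one instead keeps $a$ only where it multiplies $\chi$, i.e. uses the \emph{weak} formulation $\int\sigma\chi+\int\sigma\,a\,\chi$ and discards the good nonnegative term $\int\sigma\,a\,\chi$ with the correct sign, which is exactly the trick that makes $L^\infty$-control of $a$ unnecessary).
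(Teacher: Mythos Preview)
Your overall strategy matches the paper's: subtract two solutions, exploit the monotonicity $(p^1-p^2)(\rho^1-\rho^2)\ge 0$, and prove the $L^1$-contraction by Holmgren duality with a backward parabolic problem whose coefficients you regularize, then use the maximum principle and pass to the limit. However, there is a genuine technical gap in how you parametrize the coefficients and in the sign/maximum-principle arguments you invoke. Writing $q=a\sigma$ with $a\ge 0$ produces an \emph{unbounded} coefficient (indeed $a=+\infty$ wherever $\sigma=0$ but $q\ne 0$, which occurs on parts of the saturated set). Your dual equation then carries the source $(1+a)\chi$, so the claimed bound $\|\varphi\|_{L^\infty}\le T\|\chi\|_{L^\infty}$ does not follow from the maximum principle. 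Your fallback --- ``discard the good nonnegative term $\int\sigma\,a\,\chi$'' --- would require $\sigma\chi\ge 0$ pointwise in space--time, which is not available for a generic test function $\chi$.

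The paper repairs exactly this point with a different parametrization: set $A=\dfrac{\rho^1-\rho^2}{(\rho^1-\rho^2)+(p^1-p^2)}$ and $B=\dfrac{p^1-p^2}{(\rho^1-\rho^2)+(p^1-p^2)}$, so that \emph{both} $A,B\in[0,1]$ (this is precisely the monotonicity), and write the difference identity as
\[
\mathcal I(\phi,T)-\mathcal I(\phi,0)=\int_0^T\!\!\int_\Om\bigl((\rho^1-\rho^2)+(p^1-p^2)\bigr)\bigl[A\,\partial_t\phi+(A+B)\Delta\phi+A\,\u\cdot\nabla\phi\bigr]\dd x\dd t.
\]
The dual problem is posed with \emph{final data} $\phi(T)=\theta$, $|\theta|\le 1$, and zero source; after regularizing to $A_\e,B_\e\in[\e,1]$ and dividing by $A_\e$, one solves $\partial_t\phi_\e+(1+B_\e/A_\e)\Delta\phi_\e+\u_\e\cdot\nabla\phi_\e=0$, for which the maximum principle gives $|\phi_\e|\le 1$ cleanly. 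The key uniform estimate you are missing is the \emph{weighted} bound $\|(B_\e/A_\e)^{1/2}\Delta\phi_\e\|_{L^2([0,T]\times\Om)}\le C$, obtained by multiplying the dual equation by $\Delta\phi_\e$ and integrating (Lemma~\ref{lem:estimates}); this is exactly what makes the error terms in $A-A_\e$, $B-B_\e$, $\u-\u_\e$ vanish as $\e\to 0$ even though $B_\e/A_\e$ blows up like $1/\e$. Without this bounded-$A,B$ reparametrization and the accompanying weighted estimate, your limit passage does not close.
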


\begin{proof}
The existence of a solution $(\rho,p)$ was obtained in \cite{MesSan}. Now let us show the uniqueness of the solution via an $L^1-$contraction property; notice that this contraction is valid for a general open set $\Om$ with $C^1$ boundary.

Let us write the weak formulation of \eqref{2nd_2}: for every smooth test function  $\phi:[0,T]\times\Om\to\R$ with $\nabla\phi\cdot\n=0$ on $[0,T]\times\partial\Om$ we have

\begin{equation}\label{2nd_dual}
\int_0^T\int_\Om\left[\rho\partial_t\phi+(\rho+p)\Delta\phi+\rho\u\cdot\nabla\phi\right]\dd x\dd t+\int_\Om\rho_0(x)\phi(0,x)\dd x=\int_\Om\rho(T,x)\phi(T,x)\dd x.
\end{equation}
By density arguments the above formulation holds for $\phi\in W^{1,1}([0,T];L^1(\Om))\cap L^2([0,T];H^2(\Om)).$

Now, let us consider two solutions $(\rho^1,p^1)$ and $(\rho^2,p^2)$ of Problem \ref{2nd_2}, with initial conditions respectively $\rho^1_0$ and $\rho^2_0$. 
 Writing the weak formulation \eqref{2nd_dual} for both of them and taking the difference we obtain
\begin{equation}\label{difference}
\mathcal{I}(\phi,T) =\mathcal{I}(\phi,0) + \int_0^T\int_\Om\left[(\rho^1-\rho^2)\partial_t\phi+(\rho^1-\rho^2+p^1-p^2)\Delta\phi+(\rho^1-\rho^2)\u\cdot\nabla\phi\right]\dd x\dd t,
\end{equation}
where $\ds\mathcal{I}(\phi,t)= \int_{\Om} \phi(t,x) [ \rho^1(t,x) - \rho^2(t,x)]\dd x$. We introduce the following quantities
$$
A:=\frac{\rho^1-\rho^2}{(\rho^1-\rho^2)+(p^1-p^2)}\;\;\;{\rm{and}}\;\;\; B:=\frac{p^1-p^2}{(\rho^1-\rho^2)+(p^1-p^2)}.
$$
Note that $0\le A\le 1$ and $0\le B\le 1$ a.e. in $[0,T]\times\Om$ and $A+B=1.$ To be consistent with these bounds, we set $A=0$ when $\rho^1=\rho^2$, even if $p^1=p^2$ and $B=0$ when $p^1=p^2$, even if $\rho^1=\rho^2$. With these notations the weak formulation for the difference gives
\begin{equation}\label{uniq_cond}
\mathcal{I}(\phi,T) = \mathcal{I}(\phi,0) + \int_0^T\int_\Om((\rho^1-\rho^2)+(p^1-p^2))\left[A\partial_t\phi+(A+B)\Delta\phi+A\u\cdot\nabla\phi\right]\dd x\dd t.
\end{equation}
For a smooth function $\theta:\Om\to\R$ such that $|\theta| \leq 1$, let us consider the dual problem
\begin{equation}\label{parabolic1} 
\left\{
\begin{array}{rl}
A\partial_t\phi+(A+B)\Delta\phi+A\u\cdot\nabla\phi=0, & \iin\ [0,T)\times\Om,\\[10pt]
\nabla\phi\cdot\n=0\ \oon\ [0,T]\times\partial\Om, &  \phi(T,\cdot)=\theta\ \ae\ \iin\ \Om.
\end{array}
\right.
\end{equation}

Let us remark that if we are able to find a (reasonably regular) solution $\phi$ for this problem for any $\theta$ smooth then, using the maximum principle and then optimizing in $\theta$, we would get an $L^1-$contraction result for $\rho$: in particular we get uniqueness for the initial value problem for $\rho$ and hence also for $p$ (as done in the end of the proof of Theorem \ref{thm:unique_first}). However, since the coefficients in \eqref{parabolic1} are not regular,  we study a regularized problem. For $\e>0$ let us consider $A_\e$, $B_\e$, $\u_{\e}$ to be smooth approximations of $A$, $B$ and $\u$ such that
\begin{equation}\label{eq:approx}
\|A-A_\e\|_{L^r([0,T]\times\Om)} + \|B-B_\e\|_{L^r([0,T]\times\Om)} + \| \u - \u_{\e} \|_{L^r([0,T] \times \Om )}<\e,\;\; ,\;\; \e<A_\e, B_\e\le 1,
\end{equation}
for some $1\le r<+\infty$, the value of which to be chosen later. 
The regularized problem reads as follows
\begin{equation}\label{parabolic2} 
\left\{
\begin{array}{rl}
\partial_t\phi_\e+(1+B_\e/A_\e)\Delta\phi_\e+\u_{\e}\cdot\nabla\phi_\e=0, & \iin\ (0,T)\times\Om,\\[10pt]
\nabla\phi_\e\cdot\n=0\ {\rm{a.e.}}\ \oon\ [0,T]\times\partial\Om, &  \phi_\e(T,\cdot)=\theta\ \ \iin\ \Om.
\end{array}
\right.
\end{equation}
For all $\e>0$ the above problem is uniformly parabolic and $B_\e/A_\e$ is continuous and positive. Moreover  $\theta$ is smooth, thus by classical results (see for instance \cite{lady, krylov}) the problem has a (unique) solution $\phi_\e\in C^1([0,T] \times \overline{ \Om} )$. In particular $\phi_\e$  can be used as test function in \eqref{2nd_dual}. In the followings we shall use some standard uniform estimates (in $\e$)  on $\phi_\e$ given in Lemma \ref{lem:estimates}.

In particular, using $\phi_\e$ as test function in \eqref{uniq_cond} one has
\begin{align*}
\mathcal{I}(\phi_{\e},T) - \mathcal{I}(\phi_{\e},0)   & = \int_0^T\int_\Om (\rho^1-\rho^2+p^1-p^2)\left[A\partial_t\phi_\e+(A+B)\Delta\phi_\e+A\u\cdot\nabla\phi_\e\right]\dd x\dd t\\
&=\int_0^T\int_\Om (\rho^1-\rho^2+p^1-p^2)\left[A\partial_t\phi_\e+(A+B)\Delta\phi_\e+A\u\cdot\nabla\phi_\e\right]\dd x\dd t\\
&-\int_0^T\int_\Om (\rho^1-\rho^2+p^1-p^2)A\left[\partial_t\phi_\e+(1+B_\e/A_\e)\Delta\phi_\e+\u_\e \cdot\nabla\phi_\e\right]\dd x\dd t\\
&=\int_0^T\int_\Om (\rho^1-\rho^2+p^1-p^2)(B_\e/A_\e)(A_\e-A)\Delta\phi_\e\dd x\dd t\\
&+ \int_0^T\int_\Om (\rho^1-\rho^2+p^1-p^2)(B-B_\e)\Delta\phi_\e\dd x\dd t\\
&+ \int_0^T\int_\Om (\rho^1-\rho^2+p^1-p^2)A(\u_\e - \u) \cdot \nabla \phi_\e\dd x\dd t\\
&:=I_\e^1+I_\e^2 + I_\e^3
\end{align*}
Let us show that $|I_\e^1|\to 0$, $|I_\e^2|\to 0$ and $|I_{\e}^3| \to 0$ as $\e\to 0$. More precisely, first let us recall that  $0\le \rho^1,\rho^2\le 1$ a.e. in $[0,T]\times\Om$, hence $\rho^1,\rho^2\in L^\infty([0,T]\times\Om)$. On the other hand $p^1,p^2\in L^2([0,T]; H^1(\Om))$ and by Corollary \ref{cor:proj} we have that 
$$\int_\Om|\nabla p^i_t|^2\dd x\le \int_\Om|\u_t|^2\dd t,$$ 
for almost every $t\in[0,T].$ This implies that (since $\u$ is bounded)
$${\rm{ess-sup}}_{t\in[0,T]}\|\nabla p^i_t\|_{L^2(\Om)}\le C.$$
In addition, $p^i$'s being pressures one has $|\{p_t^i=0\}|\ge|\{\rho_t^i<1\}|\ge |\Om|-1>0$ for a.e. $t\in[0,T],$ and so by a suitable version of Poincar\'e's inequality (since $p_t^i$ vanishes on a set of positive Lebesgue measure) one obtains $p^i\in L^\infty([0,T]; H^1(\Om)).$ In particular, by the Sobolev embedding this implies that $p^i\in L^\infty([0,T];L^q(\Om)),$ $i\in\{1,2\}$ for all $1\le q\le2^*.$ Let us fix $q\in(2,2^*),$  where $2^{*}=2d/(d-2)$ if $d\ge 3$ and $2^*=\infty$ if $d=2$. 

This implies the following estimates
\begin{align*} 
|I_\e^1|&\le \|\rho^1-\rho^2\|_{L^{\infty}([0,T]\times\Om)}\cdot\|(B_\e/A_\e)^{1/2}(A-A_\e)\|_{L^2([0,T]\times\Om)}\cdot \|(B_\e/A_\e)^{1/2}\Delta\phi_\e\|_{L^2([0,T]\times\Om)}\\
&+\int_0^T\|p^1-p^2\|_{L^{q}(\Om)}\cdot\|(B_\e/A_\e)^{1/2}(A-A_\e)\|_{L^r(\Om)}\cdot \|(B_\e/A_\e)^{1/2}\Delta\phi_\e\|_{L^2(\Om)}\dd t\\
&\le C(1/\e)^{1/2}\e\\
&+\|p^1-p^2\|_{L^\infty(L^{q})}\cdot\|(B_\e/A_\e)^{1/2}(A-A_\e)\|_{L^2(L^r)}\cdot \|(B_\e/A_\e)^{1/2}\Delta\phi_\e\|_{L^2(L^2)} \\
&\le C\e^{1/2}\to 0, \;\; {\rm{as}\ }\e\to 0
\end{align*}
and similarly 
\begin{align*} 
|I_\e^2|&\le \|\rho^1-\rho^2\|_{L^{\infty}([0,T]\times\Om)}\cdot\|(A_\e/B_\e)^{1/2}(B-B_\e)\|_{L^2([0,T]\times\Om)}\cdot \|(B_\e/A_\e)^{1/2}\Delta\phi_\e\|_{L^2([0,T]\times\Om)}\\
&+\|p^1-p^2\|_{L^\infty(L^{q})}\cdot\|(A_\e/B_\e)^{1/2}(B-B_\e)\|_{L^2(L^r)}\cdot \|(B_\e/A_\e)^{1/2}\Delta\phi_\e\|_{L^2(L^2)}\\
&\le C(1/\e)^{1/2}\e= C\e^{1/2}\to 0, \;\; {\rm{as}\ }\e\to 0;
\end{align*}
finally we have
$$ |I_\e^3| \leq  \| \rho^1 - \rho^2\|_{L^{\infty}(L^{q})} \cdot \| \u - \u_{\e} \|_{L^{1}(L^r)} \cdot \| \nabla \phi_\e \|_{L^{\infty}(L^2)} \leq C \e \to 0,$$
 where $r>1$ is the exponent such that $\ds\frac12+\frac1r+\frac{1}{q}=1,$ i.e. $\ds r=\frac{2q}{q-2}$ (recall that $q\in(2,2^*)$. We used the approximations \eqref{eq:approx} with this specific $r$. Hence we obtained that
$$\mathcal{I}(\phi_{\e},T ) = \mathcal{I}(\phi_\e , 0) + o(1);$$
now we use the fact that we fixed the final condition $\theta$ for all $\e$, and also that $|\phi_{\e} | \leq 1$, that is implied by the strong maximum principle (see for example Theorem 2.9 in \cite{lieberman}); in particular we can write
$$ \int_{\Om} \theta(x) \bigl( \rho^1 (T,x )  - \rho^2(T,x) \bigr) \dd x = \int_{\Om} \phi_{\e}(0,x) ( \rho_0^1 - \rho_0^2) \dd x + o(1) \leq \int_{\Om} | \rho^1_0 - \rho^2_0| \dd x + o(1).$$
Letting $\e \to 0$ we finally find $\ds\int_\Om \theta ( \rho^1_T - \rho^2_T)\dd x \leq \| \rho^1_0 - \rho^2_0 \|_{L^1}$, and so, optimizing in $|\theta| \leq 1$, we proved the $L^1-$contraction. In particular this implies that $\rho^1=\rho^2$ a.e. in $[0,T]\times\Om$ whenever $\rho^1_0 = \rho^2_0$. 
Then one obtains also that $p^1=p^2$ a.e. in $[0,T]\times\Om$, as in the end of the proof of Theorem \ref{thm:unique_first}. The result follows.

\end{proof}

\begin{lemma}\label{lem:estimates}
Let $\phi_\e$ be a solution of \eqref{parabolic2}. Then there exists a constant 
$$C=C(T,\|\u\|_{L^\infty},\|\nabla \theta\|_{L^2(\Om)})>0$$ 
such that we have the following estimates, uniformly in $\e>0$:
\begin{itemize}
\item[(i)] $\ds\sup_{t\in[0,T]}\|\nabla\phi_\e(t)\|_{L^2(\Om)}\le C;$\\
\item[(ii)] $\|(B_\e/A_\e)^{\frac12}\Delta\phi_\e\|_{L^2([0,T]\times\Om)}\le C;$\\
\item [(iii)] $\|\Delta\phi_\e\|_{L^2([0,T]\times\Om)}\le C.$
\end{itemize}
\end{lemma}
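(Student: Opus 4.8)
The plan is to derive these estimates by testing the regularized equation \eqref{parabolic2} against suitable multipliers and integrating by parts, exploiting the no-flux boundary condition $\nabla\phi_\e\cdot\n=0$. Throughout I work backwards in time, so the "initial data" is the final condition $\phi_\e(T,\cdot)=\theta$.

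\begin{proof}[Sketch of proof of Lemma \ref{lem:estimates}]
First I would prove (i) and (ii) simultaneously. Multiply the equation in \eqref{parabolic2} by $\Delta\phi_\e$ and integrate over $\Om$. Since $\nabla\phi_\e\cdot\n=0$ on $\partial\Om$, we have $\int_\Om \partial_t\phi_\e\,\Delta\phi_\e\dd x = -\frac12\frac{\dd}{\dd t}\int_\Om|\nabla\phi_\e|^2\dd x$ (here one should be slightly careful about the sign of $\dd/\dd t$ because $t$ runs backwards, which actually helps: integrating from $t$ to $T$ the boundary term at $T$ gives $\|\nabla\theta\|_{L^2}^2$). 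The term $\int_\Om (1+B_\e/A_\e)|\Delta\phi_\e|^2\dd x$ is the good dissipative term, and since $B_\e/A_\e>0$ it dominates both $\|\Delta\phi_\e\|_{L^2}^2$ and $\|(B_\e/A_\e)^{1/2}\Delta\phi_\e\|_{L^2}^2$. The only term to control is the transport term $\int_\Om \u_\e\cdot\nabla\phi_\e\,\Delta\phi_\e\dd x$; I would bound it by $\|\u_\e\|_{L^\infty}\|\nabla\phi_\e\|_{L^2}\|\Delta\phi_\e\|_{L^2}$ and absorb the $\|\Delta\phi_\e\|_{L^2}$ factor into the good term via Young's inequality, at the cost of a term $C\|\u\|_{L^\infty}^2\|\nabla\phi_\e\|_{L^2}^2$. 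Integrating in time from $t$ to $T$ and applying Gr\"onwall's lemma then yields (i) with a constant depending only on $T$, $\|\u\|_{L^\infty}$ and $\|\nabla\theta\|_{L^2(\Om)}$; feeding this back into the dissipation integral gives (ii), and since the extra "$1$" in $1+B_\e/A_\e$ is already there, the same computation gives (iii) directly. In fact (iii) follows from (ii) only if $B_\e/A_\e$ is bounded below, which we do not have uniformly, so it is cleaner to extract (iii) from the $1\cdot|\Delta\phi_\e|^2$ part of the dissipation term rather than from (ii).

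The one technical subtlety — and the main (mild) obstacle — is justifying the integration by parts $\int_\Om\partial_t\phi_\e\Delta\phi_\e = -\frac12\frac{\dd}{\dd t}\|\nabla\phi_\e\|_{L^2}^2$ and the use of $\Delta\phi_\e$ as a test function: this requires $\phi_\e$ to have enough regularity, namely $\phi_\e\in L^2([0,T];H^2(\Om))$ with $\partial_t\phi_\e\in L^2$. For fixed $\e>0$ this is guaranteed by the classical parabolic theory already invoked for \eqref{parabolic2} (smooth coefficients, smooth data, $C^1$ up to the boundary, and in fact higher interior regularity), and on a convex or $C^1$ domain with Neumann conditions the $H^2$ regularity up to the boundary is standard; see \cite{lady, krylov}. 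Once the identity is licit for each fixed $\e$, all the above estimates are uniform in $\e$ because the constant $C$ never sees $\e$ (we only used $B_\e/A_\e>0$ and $\|\u_\e\|_{L^\infty}\le\|\u\|_{L^\infty}$, which may be assumed after a standard mollification, or replaced by $\|\u\|_{L^\infty}+1$). This is exactly what is needed to pass to the limit in the proof of the theorem.
\end{proof}
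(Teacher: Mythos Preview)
Your proof is correct and follows essentially the same approach as the paper: multiply \eqref{parabolic2} by $\Delta\phi_\e$, integrate over $[t,T]\times\Om$ to obtain the energy identity, absorb the transport term via Young's inequality, apply Gr\"onwall for (i), and then read off (ii) and (iii) from the dissipation term $(1+B_\e/A_\e)|\Delta\phi_\e|^2$. Your remark that (iii) should be extracted from the ``$1$'' part of the dissipation rather than from (ii) is exactly right and matches how the paper concludes via the full identity \eqref{estim1}.
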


\begin{proof} 
Let us multiply the first equation in \eqref{parabolic2} by $\Delta\phi_\e$ and integrate over $[t,T]\times\Om$ for $0\le t<T$. We obtain
\begin{align}\label{estim1}
\frac12\|\nabla\phi_\e(t)\|^2_{L^2(\Om)}&+\int_t^T\int_\Om(1+B_\e/A_\e)|\Delta\phi_\e|^2\dd x\dd t\nonumber\\
&=\frac 12 \int_\Om| \nabla\theta|^2 \dd x -\int_t^T\int_\Om \u_\e \cdot\nabla\phi_\e\Delta\phi_\e\dd x\dd t
\end{align}
Hence by Young's inequality we have 
\begin{align*}
\frac12\|\nabla\phi_\e(t)\|^2_{L^2(\Om)}&\le \frac{\|\u\|_{L^\infty}}{2\d}\int_t^T\|\nabla \phi_\e(s)\|^2_{L^2(\Om)}\dd s+\frac12\|\nabla \theta\|^2_{L^2(\Om)}\\
&\le C+\frac{C}{2}\int_t^T\|\nabla\phi_\e(s)\|_{L^2(\Om)}^2\dd s
\end{align*}
where the term in $|\Delta \phi_\e|^2$ has been absorbed by the left hand side, and $0<\d\le2/\|\u\|_{L^\infty}$ is a fixed constant and the constant $C>0$ is depending just on $\|\nabla \theta\|_{L^2(\Om)}$ and $\|\u\|_{L^\infty}$. Hence by Gr\"onwall's inequality we obtain 
$$\frac12\|\nabla\phi_\e(t)\|^2_{L^2(\Om)}\le Ce^{C(T-t)},$$
which implies in particular that $\ds\sup_{t\in[0,T]}\|\nabla\phi_\e(t)\|_{L^2(\Om)}\le C.$ Thus ${\rm{(i)}}$ follows. 

On the other hand choosing $\d:=2/\|\u\|_{L^\infty}$ in Young's inequality used in \eqref{estim1} and using (i), we obtain
$$\int_t^T\int_\Om(B_\e/A_\e)|\Delta\phi_\e|^2\dd x\dd t\le C$$
hence $\|(B_\e/A_\e)^{\frac12}\Delta\phi_\e\|_{L^2([0,T]\times\Om)}\le C,$ and thus  ${\rm{(ii)}}$ follows.

By \eqref{estim1}, ${\rm{(i)}}$ and ${\rm{(ii)}}$ easily imply ${\rm{(iii)}}.$
\end{proof}

\section{About the $L^1-$contraction in the first order case}\label{ss:l1}
%
%

In the previous section we proved uniqueness in the second order case using an $L^1-$contraction result. We expect this result to be true also in the first order case; however we expect the treating of this $L^1-$contraction problem in the most general framework (let us say when $\u \in W^{1,1}$) to be difficult, since this approach should ``include'' (in the proof) also the well-posedness in the Di Perna-Lions theory as a special case. In fact, when one has $\nabla \cdot \u=0$ and if one starts with $\rho_0 \leq 1$ then this condition is preserved without adding the pressure term.\\

However in the case treated in Section \ref{two} (i.e. with a monotone velocity field) we can try to sketch a proof of the $L^1$ contraction result using the uniqueness already proved: let us approximate the solution  discretizing in time using the splitting discrete scheme ``continuity equation + Wasserstein projection onto the set $\{ \rho \leq 1\}$'' (similarly to the scheme introduced in \cite{MesSan}). Since the vector field is monotone there exists a unique flow, that implies also that there exists a unique solution to the initial value problem $\partial_t \rho_t + \nabla \cdot ( \u_t \rho_t)=0$, $\rho_{t_0}=\rho$; in particular one can define a function $\Psi_{t_0}^{t_1} (\rho) := \rho_{t_1}$, that will satisfy also the semi-group rule $\Psi_t^{s'} \circ \Psi_s^t = \Psi_s^{s'}$. Let $\tau$ be a time step and define recursively $\rho_0^{\tau}= \rho_0$ and then

$$\rho^{\tau}_{n+1} =\begin{cases} \Psi_{n\tau}^{(n+1)\tau}( \rho^{\tau}_n) \qquad &\text{ if } n \text{ is even } \\ \mathcal{P} ( \rho_{n} )& \text{ if }n\text{ is odd.}\end{cases}$$
Then the $L^1$ distance is preserved through the continuity equation step while it decreases after the projection thanks to Lemma \ref{lem:contraction}. So in particular the $L^1-$contraction is true in the discrete scheme; once we have that the scheme converges as $\tau \to 0$ to our equation, the uniqueness result gives that this property is preserved in the limit. In order to guarantee convergence (see \cite{DiMoSa} for general conditions) the crucial quantity to estimate is $W_2(\Psi_t^{t+\tau} (\rho), \rho)$; using the Benamou-Brenier formula, two conditions that guarantee a good estimate are:

\begin{itemize}
\item $\u\in L^2([0,T];L^{\infty}(\R^d))$: in this case we would have $ W_2^2(\Psi_t^{t+\tau} (\rho), \rho) \leq  \t\int_t^{t+\tau}  \| \u_s \|^2_{L^\infty}\dd s$;

\item $(\nabla \cdot \u)_{-} \in L^1([0,T] ; L^{\infty}(\R^d))$: in this case we would have $\| \Psi_t^s (\rho) \|_{L^\infty} \leq C \| \rho \|_{L^\infty}$ for some universal $C$ and in particular we have $W_2^2(\Psi_t^{t+\tau} (\rho), \rho) \leq C  \| \rho \|_{L^\infty}  \t\int_t^{t+\tau} \| \u_s \|^2_{L^2}\dd s$.
\end{itemize}

We believe that this general scheme (uniqueness in the Wasserstein framework and approximation with $L^1-$contractive time discrete approximations) could be adapted also with some other convection terms $\Phi$.

\begin{lemma}\label{lem:contraction} Let us consider the projection operator $\mathcal{P}: \sP(\R^d ) \to \sP(\Omega)$
\begin{equation}\label{def:proj} \mathcal{P}(\rho) = {\rm argmin} \{ W_2^2(\rho, \eta) \; : \; \eta \in \sP(\Omega) , \; \eta \leq 1 \}.
\end{equation}
Then, when $\rho_1, \rho_2$ are probability densities we have $\| \mathcal{P}(\rho_1) - \mathcal{P} ( \rho_2) \|_{L^1} \leq \|\rho_1 - \rho_2\|_{L^1}$.
\end{lemma}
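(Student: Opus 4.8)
The plan is to show that the Wasserstein projection onto the convex set $\cK_1 = \{\eta \in \sP(\Omega) : \eta \le 1\}$ is an $L^1$-contraction, by exploiting the explicit structure of this projection. The key structural fact I would use is that $\cK_1$ is a geodesically convex set whose projection has a simple ``bathtub'' description: there is a threshold function determined by the dual variable, and on the region where the mass piles up one simply caps the density at $1$. More precisely, I would first recall (as in the standard optimal-transport description of projections onto $\cK_1$, cf.\ the discussion around Lemma \ref{positivity}) that $\mathcal{P}(\rho) = (\id - \nabla \varphi_\rho)_\sharp \rho$ for a Kantorovich potential $\varphi_\rho$, and that the saturated region $\{\mathcal{P}(\rho) = 1\}$ together with the transport structure characterizes the projection. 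The cleanest route, however, avoids the transport maps: I would use the variational characterization of $\mathcal{P}(\rho)$ as the minimizer and the first-order optimality conditions to identify $\mathcal{P}(\rho)$ pointwise.

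First I would set $\sigma_i = \mathcal{P}(\rho_i)$ for $i=1,2$. The main step is to establish a pointwise comparison structure: on the set $\{\sigma_1 > \sigma_2\}$ we have $\sigma_2 < 1$, hence (by the optimality/complementarity conditions for the projection) the pressure associated to $\sigma_2$ vanishes there, which forces a one-sided inequality relating $\rho_1 - \rho_2$ and $\sigma_1 - \sigma_2$ on that set; symmetrically on $\{\sigma_2 > \sigma_1\}$. From this I would deduce
$$\int_{\Omega} (\sigma_1 - \sigma_2)_+ \dd x \le \int_{\Omega} (\rho_1 - \rho_2)_+ \dd x, \qquad \int_{\Omega} (\sigma_1 - \sigma_2)_- \dd x \le \int_{\Omega} (\rho_1 - \rho_2)_- \dd x,$$
and summing gives the claim, since both $\rho_i$ and $\sigma_i$ are probability densities so the positive and negative parts have equal integral and no separate normalization constant intervenes. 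Alternatively, since $\|f\|_{L^1} = 2\int f_+ $ for $f$ with zero mean, it suffices to control only the positive part.

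The main obstacle I anticipate is making rigorous the pointwise ``capping'' description of $\mathcal{P}(\rho)$ and the associated complementarity relation: one must argue that where $\sigma_1 > \sigma_2$ the density $\sigma_2$ is strictly below the cap, and that the transport maps rearranging $\rho_i$ to $\sigma_i$ interact in a monotone way (this is where convexity of $\Omega$ and the structure from Theorem \ref{OT}(iii)--(iv), i.e.\ that the optimal map is the gradient of a convex function, enter). A technically cleaner alternative that sidesteps the pointwise analysis is to use the known fact that $\mathcal{P}$ is the Wasserstein gradient-flow resolvent (or metric projection) of the geodesically convex indicatrix $\mathscr{I}_1$, combined with a contraction property for such projections; but since the statement asserts $L^1$-contraction rather than $W_2$-contraction, I expect the direct layer-cake argument on $\{\sigma_1 > \sigma_2\}$, using the explicit form $\sigma_i = \min(\rho_i \text{ after transport}, 1)$, to be the honest path, and the verification that transporting by two ``nested'' convex-gradient maps preserves the relevant inequalities is the delicate point.
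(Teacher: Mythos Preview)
Your plan has a genuine gap at its central step. The ``one-sided inequality relating $\rho_1-\rho_2$ and $\sigma_1-\sigma_2$ on $\{\sigma_1>\sigma_2\}$'' that you hope to extract from the complementarity conditions does \emph{not} hold pointwise. On $\{\sigma_1>\sigma_2\}$ you correctly get $\sigma_2<1$, and the structure of the projection then gives $\sigma_2=\rho_2$ there; but nothing forces $\sigma_1\le\rho_1$ on that set, because mass of $\rho_1$ may have been transported \emph{into} this region from elsewhere. For instance, on $\Omega=[0,3]$ with $\rho_1=\tfrac32\mathbf{1}_{[0,2/3]}$ and $\rho_2=\tfrac12\mathbf{1}_{[0,2]}$, one has $\sigma_1=\mathbf{1}_{[0,1]}$, $\sigma_2=\rho_2$, and on $(2/3,1)$ we get $\sigma_1-\sigma_2=\tfrac12>0>\rho_1-\rho_2=-\tfrac12$. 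So the pointwise comparison fails, and you have offered no mechanism to recover the integral inequality $\int(\sigma_1-\sigma_2)_+\le\int(\rho_1-\rho_2)_+$ once the local argument breaks. The difficulty you flag at the end (``two nested convex-gradient maps interacting monotonically'') is precisely the missing ingredient, not a detail to be filled in.

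The paper's proof bypasses this entirely via a different idea. One first extends $\mathcal{P}$ to sub-probability measures (mass $\le|\Omega|$) by the same formula \eqref{def:proj}, and then invokes the \emph{monotonicity} of the projection: $\rho\le\eta$ a.e.\ implies $\mathcal{P}(\rho)\le\mathcal{P}(\eta)$ a.e.\ (this is the nontrivial input, taken from \cite{AleKimYao}, Theorem~5.1). Applying this with $\rho=\min\{\rho_1,\rho_2\}$ gives $\min\{\mathcal{P}(\rho_1),\mathcal{P}(\rho_2)\}\ge\mathcal{P}(\rho)$, and since for probability densities $\|f-g\|_{L^1}=2-2\int\min\{f,g\}$, one concludes
\[
\|\mathcal{P}(\rho_1)-\mathcal{P}(\rho_2)\|_{L^1}\le 2-2\!\int\mathcal{P}(\rho)=2-2\!\int\rho=\|\rho_1-\rho_2\|_{L^1}.
\]
The monotonicity statement is exactly the rigorous version of the ``monotone interaction of transport maps'' you were reaching for; once it is isolated and cited, the $L^1$-contraction follows in two lines.
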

\begin{proof}
First we see that formula \eqref{def:proj} let us extend $\mathcal{P}$ to measures in $\sM_+(\R^d)$ with mass less than $|\Omega|$. In this context a monotonicity property is true (see for example Theorem 5.1 in \cite{AleKimYao}), that is we have $\mathcal{P}(\rho) \leq \mathcal{P}(\eta)$ almost everywhere if $\rho \leq \eta$. Now we can derive the $L^1$ contraction: let us denote $\rho= \min \{ \rho_1 , \rho_2\}$. First of all we have that 
$$|\rho_1 - \rho_2| = \max\{\rho_1, \rho_2\} - \min \{ \rho_1 , \rho_2\} = ( \rho_1 + \rho_2 - \min\{\rho_1, \rho_2\} ) - \min\{\rho_1,\rho_2\};$$
in particular $\| \rho_1 - \rho_2\|_{L^1} = 2- 2 \int \rho\dd x$. By the monotonicity we have $\mathcal{P}(\rho_i) \geq \mathcal{P} (\rho)$ and in particular we have $\min\{ \mathcal{P}(\rho_1) , \mathcal{P}(\rho_2) \} \geq \mathcal{P}(\rho)$ and so
$$  \| \mathcal{P}(\rho_1) - \mathcal{P}(\rho_2)\|_{L^1} = 2- 2\int \min\{ \mathcal{P}(\rho_1) , \mathcal{P}(\rho_2) \}\dd x  \leq 2- 2\int \mathcal{P}(\rho)\dd x = 2-2 \int \rho\dd x = \| \rho_1 -\rho_2 \|_{L^1}, $$
which proves the claim.
\end{proof}

\vspace{1cm}

{\sc Acknowledgements.} We would like to thank Filippo Santambrogio for the fruitful discussions during this project and for the careful reading of various versions of the manuscript. The authors warmly acknowledge the support of the ANR project ISOTACE (ANR-12-MONU-0013). We also thank the referee for her/his very useful comments and remarks on the manuscript.

\end{document}